\documentclass[a4paper,11pt]{article} 
\usepackage[top=2.5cm, bottom=2.5cm, left=2.5cm, right=2.5cm]{geometry}
\usepackage[english]{babel}
\usepackage[T1]{fontenc}
\usepackage[utf8]{inputenc}
\usepackage{amssymb}
\usepackage{lmodern}
\usepackage{microtype}
\usepackage{hyperref}
\usepackage{graphicx}
\usepackage{amsmath}
\usepackage{amsmath,amsfonts}
\baselineskip=16pt

\usepackage{tikz}
\usepackage{array}

\usepackage{arydshln}
\usepackage{rotating}
\usepackage[justification=centering]{caption}
\newcolumntype{P}[1]{>{\centering\arraybackslash}p{#1}}
\usepackage{graphicx}
\usepackage{subcaption}
\usepackage{indentfirst}
\usepackage{mathtools}

\usepackage{float}
\usepackage{amsmath,amssymb,amsthm}
\providecommand{\keywords}[1]{\textbf{\textit{Keywords:}} #1}

\newtheorem{theorem}{Theorem}[section]
\newtheorem{cor}{Corollary}[section]
\newtheorem{lemme}{Lemma}[section]

\newtheorem{definition}{Definition}[section]

\newtheorem{proposition}{Proposition}[section]



\newcommand{\cube}[3]{
			\fill[color = black!20] (#1,#2,#3) +(0,1,0) -- +(0,1,-1)
 -- +(1,1,-1) -- +(1,1,0) -- cycle;
			\fill[color = black!30] (#1,#2,#3) -- +(0,1,0) -- +(1,1,
0) -- +(1,0,0) -- cycle;
			\fill[color = black!50] (#1,#2,#3) +(1,0,0) -- +(1,1,0) 
-- +(1,1,-1) -- +(1,0,-1) -- cycle;
			\draw[color = black!80] (#1,#2,#3) -- +(0,1) -- +(1,1) -
- +(1,0) -- cycle;
.025,.94,0) -- +(0,1,-1) -- +(1,1,-1) -- +(1,0,-1) -- +(1,0,0);
			\draw[color = black!80] (#1,#2,#3) + (0,1,0) -- +(0,1,-1
) -- +(1,1,-1) -- +(1,0,-1) -- +(1,0,0);
			\draw[color = black!80] (#1,#2,#3) +(1,1,0) -- +(1,1,-1)
;
}


\newcommand{\cell}[2]{
			
			\fill[color = black!30] (#1,#2) -- +(0,1) -- +(1,1) -- +(1,0) -- cycle;
					\draw[color = black!80] (#1,#2) -- +(0,1) -- +(1,1) -- +(1,0) -- cycle;
;
}
\newcommand{\freeza}[3]{
			
			\fill[color = black!30] (#1,#2,#3) -- +(0,1,0) -- +(1,1,0) -- +(1,0,0) -- cycle;
					\draw[color = black!80] (#1,#2,#3) -- +(0,1,0) -- +(1,1,0) -- +(1,0,0) -- cycle;
;
}
\newcommand{\buu}[3]{
			
			\fill[color = black!30] (#1,#2,#3) -- +(0,0,1) -- +(1,0,1) -- +(1,0,0) -- cycle;
					\draw[color = black!80](#1,#2,#3) -- +(0,0,1) -- +(1,0,1) -- +(1,0,0) -- cycle;
;
}

\title{\textbf{Plateau Polycubes and Lateral Area}}
\author{Abderrahim Arabi\\
\small USTHB, Faculty of Mathematics\\
\small RECITS Laboratory\\
\small BP 32, El Alia 16111, Bab Ezzouar\\
\small Algiers, Algeria\\
\small\tt rarabi@usthb.dz 
\and
Hacène Belbachir\\
\small USTHB, Faculty of Mathematics\\
\small RECITS Laboratory\\
\small BP 32, El Alia 16111, Bab Ezzouar\\
\small Algiers, Algeria\\
\small\tt hbelbachir@usthb.dz \\ 
\and
Jean-Philippe Dubernard\footnote{The paper is partially supported by the ERDF/GRR project MOUSTIC.}\\
\small University of Rouen-Normandie, Faculty of Science and Technique\\ 
\small LITIS Laboratory\\
\small Avenue de l’université 76800 Saint-Étienne-du-Rouvray\\
\small Rouen, France\\
\small\tt jean-philippe.dubernard@univ-rouen.fr
}
\date{}
\begin{document}
\maketitle
\begin{abstract}
In this paper, we enumerate two families of polycubes, the directed plateau polycubes and the plateau polycubes, with respect to the width and a new parameter, the Lateral Area. We give an explicit formula and the generating function for each of the two families of polycubes. Moreover, some asymptotic results about plateau polycubes are provided. We also establish results concerning the enumeration of column-convex polyominoes that are useful to get asymptotic results of polycubes. 
\end{abstract}
\keywords{Directed polycubes, polyominoes, enumeration, generating function}
\section{Introduction}
\label{Intr}
\indent 
In the cartesian plane $\mathbb{Z}^2$, a polyomino is a finite union of cells (unit squares), connected by their edges, without a cut point and defined up to a translation, for the concept see \cite{ref21}.
Polyominoes appear in statistical physics in percolation theory by the appellation of animals. They are obtained by replacing each cell by its center \cite{ref12}. Enumeration of polyominoes in general case is still an open problem. Some algorithms were made and the number of polyominoes with $n$ cells is known up to $n=56$, \cite{ref7}. However, exact enumeration exists for several families of polyominoes, (see for instance \cite{ref8, ref9}). 
\begin{figure}[H]
\centering
\begin{tikzpicture}[scale=0.5]
\fill[color=gray!40](0,1)--(1,1)--(1,2)--(0,2);
\fill[color=gray!40](0,2)--(1,2)--(1,3)--(0,3);
\fill[color=gray!40](0,3)--(1,3)--(1,4)--(0,4);

\fill[color=gray!40](1,2)--(2,2)--(2,3)--(1,3);
\fill[color=gray!40](1,4)--(2,4)--(2,5)--(1,5);
\fill[color=gray!40](1,5)--(2,5)--(2,6)--(1,6);
\fill[color=gray!40](1,6)--(2,6)--(2,7)--(1,7);

\fill[color=gray!40](2,0)--(3,0)--(3,1)--(2,1);
\fill[color=gray!40](2,1)--(3,1)--(3,2)--(2,2);
\fill[color=gray!40](2,2)--(3,2)--(3,3)--(2,3);
\fill[color=gray!40](2,4)--(3,4)--(3,5)--(2,5);
\fill[color=gray!40](2,6)--(3,6)--(3,7)--(2,7);

\fill[color=gray!40](3,1)--(4,1)--(4,2)--(3,2);
\fill[color=gray!40](3,2)--(4,2)--(4,3)--(3,3);
\fill[color=gray!40](3,3)--(4,3)--(4,4)--(3,4);
\fill[color=gray!40](3,4)--(4,4)--(4,5)--(3,5);

\fill[color=gray!40](4,1)--(5,1)--(5,2)--(4,2);
\fill[color=gray!40](4,3)--(5,3)--(5,4)--(4,4);

\fill[color=gray!40](5,1)--(6,1)--(6,2)--(5,2);
\fill[color=gray!40](5,2)--(6,2)--(6,3)--(5,3);
\fill[color=gray!40](5,3)--(6,3)--(6,4)--(5,4);
\fill[color=gray!40](5,4)--(6,4)--(6,5)--(5,5);

\fill[color=gray!40](6,3)--(7,3)--(7,4)--(6,4);

\fill[color=gray!40](7,2)--(8,2)--(8,3)--(7,3);
\fill[color=gray!40](7,3)--(8,3)--(8,4)--(7,4);

\draw (0,1)--(1,1)--(1,2)--(0,2);
\draw (0,2)--(1,2)--(1,3)--(0,3);
\draw (0,3)--(1,3)--(1,4)--(0,4);
\draw (0,1)--(0,4);

\draw(1,2)--(2,2)--(2,3)--(1,3);
\draw(1,4)--(2,4)--(2,5)--(1,5);
\draw(1,5)--(2,5)--(2,6)--(1,6);
\draw(1,6)--(2,6)--(2,7)--(1,7);
\draw (1,4)--(1,7);

\draw(2,0)--(3,0)--(3,1)--(2,1);
\draw(2,1)--(3,1)--(3,2)--(2,2);
\draw(2,2)--(3,2)--(3,3)--(2,3);
\draw(2,4)--(3,4)--(3,5)--(2,5);
\draw(2,6)--(3,6)--(3,7)--(2,7);
\draw(2,0)--(2,2);

\draw(3,1)--(4,1)--(4,2)--(3,2);
\draw(3,2)--(4,2)--(4,3)--(3,3);
\draw(3,3)--(4,3)--(4,4)--(3,4);
\draw(3,4)--(4,4)--(4,5)--(3,5);
\draw(3,3)--(3,4);

\draw(4,1)--(5,1)--(5,2)--(4,2);
\draw(4,3)--(5,3)--(5,4)--(4,4);

\draw(5,1)--(6,1)--(6,2)--(5,2);
\draw(5,2)--(6,2)--(6,3)--(5,3);
\draw(5,3)--(6,3)--(6,4)--(5,4);
\draw(5,4)--(6,4)--(6,5)--(5,5);
\draw(5,2)--(5,5);

\draw(6,3)--(7,3)--(7,4)--(6,4);

\draw(7,2)--(8,2)--(8,3)--(7,3);
\draw(7,3)--(8,3)--(8,4)--(7,4);
\draw(7,2)--(7,3);

\end{tikzpicture}
\label{f2}
\caption{Example of a polyomino.}
\end{figure}
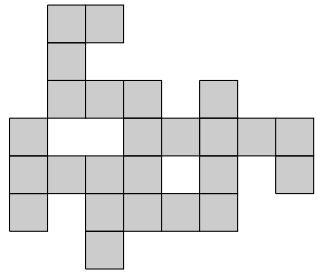

Polycubes are the equivalent of polyominoes in dimension three \cite{ref6}.
In the Cartesian plane $\mathbb{Z}^3$, we define a cell as a unit cube. A polycube is a finite union of cells, connected by their faces and defined up to translation. As polyominoes, polycubes appear in the phenomenon of percolation \cite{ref22}. There is no analytic formula for the number of polycubes of $n$ cells. In $1971$, Lunnon computed the first values up to $n=6$ \cite{ref6}. In $2008$, Aleksandrowicz and Barquet founded the values up to $18$ \cite{ref4} and more recently Luther and Mertens gave the number of polycubes up to $19$ \cite{ref13}. The $d$-dimensional-polycubes are an extension of the notion of polycube to a dimension $d\geq4$. In this case, a cell is a unit hypercube. They are used in an efficient model for real time validation \cite{ref15} and in representation of finite geometric languages \cite{ref16}. 
A few classes of polycubes were studied. Among its, let us cite, for instance, the plane partitions \cite{ref14}, the directed plateau polycubes and also some asymptotic results concerning the parallelograms polycubes that were made according to the number of cells \cite{ref1}.\\
There exist two methods to enumerate families of polycubes. The main one is the generic method \cite{ref3} and the other one is based on the use of the Dirichlet convolution \cite{ref2}.\\ 

Directed plateau polycubes were enumerated according to the volume and the width \cite{ref1}. In this paper we introduce a new parameter, \textit{the lateral area}. As no exact formula exists concerning the enumeration of specific families of polycubes according to the area, the parameter lateral area is still interesting. Indeed, it allows a certain approximation of the area. Using this parameter and the width, we enumerate the directed plateau polycubes and the plateau polycubes. We also give some asymptotic results for this family.
In the next section, we recall some definitions and properties of polyominoes and their extensions to polycubes. Two families of polycubes are enumerated according to the lateral area, the directed plateau polycubes in Section \ref{EDP} and the plateau polycubes in Section \ref{EP}. In Section \ref{AR}, asymptotic results are given for column-convex polyominoes and plateau polycubes.    

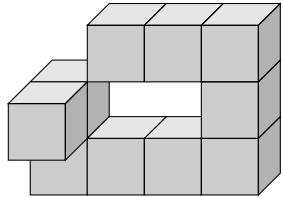
\begin{figure}[H]
\centering
\begin{tikzpicture}[scale=0.75]
\fill[color=gray!60] (1,1,1)--(1,1,0)--(1,2,0)--(1,2,1)--cycle;
\fill[color=gray!20] (0,2,1)--(0,2,0)--(1,2,0)--(1,2,1)--cycle;
\fill[color=gray!20] (2,1,1)--(2,1,0)--(3,1,0)--(3,1,1)--cycle;

\fill[color=gray!40] (0,0,1)--(1,0,1)--(1,1,1)--(0,1,1)--cycle;

\draw (1,1,1)--(1,1,0)--(1,2,0)--(1,2,1)--cycle;
\draw (0,2,1)--(0,2,0)--(1,2,0)--(1,2,1)--cycle;
\draw (2,1,1)--(2,1,0)--(3,1,0)--(3,1,1)--cycle;

\draw  (0,0,1)--(1,0,1)--(1,1,1)--(0,1,1)--cycle;

\fill[color=gray!40] (1,0,1)--(2,0,1)--(2,1,1)--(1,1,1)--cycle;
\fill[color=gray!40] (2,0,1)--(3,0,1)--(3,1,1)--(2,1,1)--cycle;
\fill[color=gray!40] (3,0,1)--(4,0,1)--(4,1,1)--(3,1,1)--cycle;
\fill[color=gray!40] (3,1,1)--(4,1,1)--(4,2,1)--(3,2,1)--cycle;
\fill[color=gray!40] (3,2,1)--(4,2,1)--(4,3,1)--(3,3,1)--cycle;
\fill[color=gray!40] (2,2,1)--(3,2,1)--(3,3,1)--(2,3,1)--cycle;
\fill[color=gray!40] (1,2,1)--(2,2,1)--(2,3,1)--(1,3,1)--cycle;
\fill[color=gray!40] (0,1,2)--(1,1,2)--(1,2,2)--(0,2,2)--cycle;

\fill[color=gray!60] (1,1,2)--(1,1,1)--(1,2,1)--(1,2,2)--cycle;
\fill[color=gray!60] (4,1,1)--(4,1,0)--(4,2,0)--(4,2,1)--cycle;
\fill[color=gray!60] (4,0,1)--(4,0,0)--(4,1,0)--(4,1,1)--cycle;
\fill[color=gray!60] (4,2,1)--(4,2,0)--(4,3,0)--(4,3,1)--cycle;

\fill[color=gray!20] (0,2,2)--(0,2,1)--(1,2,1)--(1,2,2)--cycle;
\fill[color=gray!20] (1,1,1)--(1,1,0)--(2,1,0)--(2,1,1)--cycle;
\fill[color=gray!20] (1,3,1)--(1,3,0)--(2,3,0)--(2,3,1)--cycle;
\fill[color=gray!20] (2,3,1)--(2,3,0)--(3,3,0)--(3,3,1)--cycle;
\fill[color=gray!20] (3,3,1)--(3,3,0)--(4,3,0)--(4,3,1)--cycle;


\draw  (1,0,1)--(2,0,1)--(2,1,1)--(1,1,1)--cycle;
\draw  (2,0,1)--(3,0,1)--(3,1,1)--(2,1,1)--cycle;
\draw (3,0,1)--(4,0,1)--(4,1,1)--(3,1,1)--cycle;
\draw  (3,1,1)--(4,1,1)--(4,2,1)--(3,2,1)--cycle;
\draw (3,2,1)--(4,2,1)--(4,3,1)--(3,3,1)--cycle;
\draw  (2,2,1)--(3,2,1)--(3,3,1)--(2,3,1)--cycle;
\draw  (1,2,1)--(2,2,1)--(2,3,1)--(1,3,1)--cycle;
\draw  (0,1,2)--(1,1,2)--(1,2,2)--(0,2,2)--cycle;

\draw (1,1,2)--(1,1,1)--(1,2,1)--(1,2,2)--cycle;
\draw  (4,1,1)--(4,1,0)--(4,2,0)--(4,2,1)--cycle;
\draw  (4,0,1)--(4,0,0)--(4,1,0)--(4,1,1)--cycle;
\draw  (4,2,1)--(4,2,0)--(4,3,0)--(4,3,1)--cycle;

\draw  (0,2,2)--(0,2,1)--(1,2,1)--(1,2,2)--cycle;
\draw  (1,1,1)--(1,1,0)--(2,1,0)--(2,1,1)--cycle;
\draw  (1,3,1)--(1,3,0)--(2,3,0)--(2,3,1)--cycle;
\draw  (2,3,1)--(2,3,0)--(3,3,0)--(3,3,1)--cycle;
\draw  (3,3,1)--(3,3,0)--(4,3,0)--(4,3,1)--cycle;
\end{tikzpicture}
\caption{Example of a polycube.} \label{fig:M42}
\end{figure}
\section{Preliminaries}
\label{Prel}
Let $(0,\vec{i},\vec{j})$ be an orthonormal coordinate.  
The area of a polyomino is the number of its cells, its width is the number of its columns and its height is the number of its lines.\\
A polyomino is column-convex if its intersection with any vertical line is connected.\\
A North (resp. East) step is a movement of one unit in $\vec{i}$-direction (resp. $\vec{j}$-direction).
A polyomino is directed if from a distinguished cell of the polyomino called root, we reach any other cell by a path that uses only North or East steps.   

Let $(0,\vec{i},\vec{j},\vec{k})$ be an orthonormal coordinate system. As for polyominoes, several parameters can be defined for a polycube. The \textit{volume} is the number of its cubes.
The \textit{width} (resp. \textit{height}, \textit{depth}) of a polycube is the difference between its greatest and its smallest indices according to $\vec{i}$ (resp. $\vec{j}$, $\vec{k}$). \\
A polycube is said to be \textit{directed} if each of its cells can be reached from a distinguished cell, called the \textit{root}, by a path only made of East (one unit in the $\vec{i}$-direction), North (one unit in $\vec{j}$-direction)  and Ahead (one unit in $\vec{k}$-direction) steps.
A \textit{stratum} is a polycube of width $1$. 
A \textit{plateau} is a rectangular stratum. A \textit{plateau polycube} is a polycube whose strata are plateaus \cite{ref1}. On can find more details on polyominoes in \cite{ref21} and in \cite{ref3} for polycubes.

\noindent
\begin{definition}
\label{d2_1}
The lateral area of a polycube is defined as the sum of its projections onto the planes $(\vec{i},\vec{j})$ and $(\vec{i},\vec{k})$. (Fig. \ref{M16})
\end{definition}

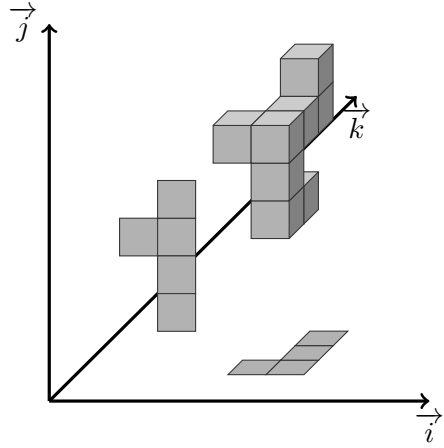
\begin{figure}[H]
\centering
\begin{tikzpicture}[scale=0.5]
\draw[very thick, ->] (2,0 ,13) -- (2,10,13) node[left]{$\overrightarrow{j}$};
\draw[very thick, ->] (2,0, 13) -- (2,0,-8) node[below]{$\overrightarrow{k}$};
\draw[very thick, ->] (2, 0,13) -- (12, 0,13) node[below]{$\overrightarrow{i}$};
	\foreach \x/\y/\z in {5/4/5,5/5/5,5/2/6,5/4/6,5/2/7,5/3/7,4/4/7,5/4/7}{
					\cube{\x}{\y}{\z};
				}
			\foreach \x/\y/\z in {0/0/3,1/0/3,1/1/3,1/-1/3,1/-2/3}{
					\freeza{\x}{\y}{\z};
				}		
				\foreach \x/\y/\z in {0/-7/-8,0/-7/-9,0/-7/-10,-1/-7/-8}{
					\buu{\x}{\y}{\z};
				}
					
\end{tikzpicture}
\caption{Example of polycube of width $2$ and lateral area $9$.}
\label{M16}
\end{figure}
Two polycubes with the same volume do not necessarily have the same lateral area (see Fig. \ref{M39}).
\begin{figure}[H]
\centering
\begin{subfigure}{.20\linewidth}
\begin{tikzpicture}[scale=.4,rotate=0]
			\begin{scope}[xshift = 0.4cm]
				
	\foreach \x/\y/\z in {5/2/6,5/2/7,5/3/7,5/4/7}{
					\cube{\x}{\y}{\z};
				}
			\foreach \x/\y/\z in {1/0/3,1/1/3,1/-1/3}{
					\freeza{\x}{\y}{\z};
				}		
				\foreach \x/\y/\z in {0/-7/-8,0/-7/-9}{
					\buu{\x}{\y}{\z};
				}
				\end{scope}	
				
			\end{tikzpicture}
\end{subfigure} \hspace{2.5cm}
\begin{subfigure}{.20\linewidth}
\begin{tikzpicture}[scale=.4,rotate=0]
			\begin{scope}[xshift = 1cm]
					\foreach \x/\y/\z in {5/2/7,5/3/7,5/4/7,6/2/7}{
					\cube{\x}{\y}{\z};
				}
			\foreach \x/\y/\z in {1/0/3,1/1/3,1/-1/3,2/-1/3}{
					\freeza{\x}{\y}{\z};
				}		
				\foreach \x/\y/\z in {0/-7/-8,1/-7/-8}{
					\buu{\x}{\y}{\z};
				}
				\end{scope}	
				
			\end{tikzpicture}
\end{subfigure}

\caption{Polycubes with the same volume and different lateral areas. }
\label{M39}
\end{figure}
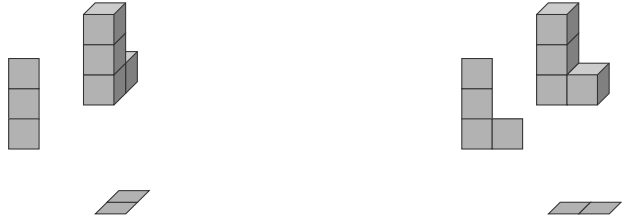
\noindent
The projection of a polycube onto a plane gives a polyomino, because if it is not the case, it implies that the projected object is not face-connected.\\
\noindent
The polyominoes obtained by the projections of a polycube onto the plane $(\vec{i},\vec{j})$ and $(\vec{i},\vec{k})$ have the same width as the polycube. \\ 
\noindent
Thus, if we project onto the planes $(\vec{i},\vec{j})$ and $(\vec{j},\vec{k})$ (resp. $(\vec{i},\vec{k})$ and $(\vec{j},\vec{k})$), we obtain a pair of polyominoes  not having the same characteristics. In this case, the two polyominoes will have the same height (resp. the height of the first one will be equal to the length of the second one).   

Let $\binom{n}{k}$ be the binomial coefficient. It counts the number of ways of choosing a subset of $k$ elements from a set of $n$ elements. The explicit formula is given by
\[\binom{n}{k} = \left\{ 
\begin{array}{l l}
  \frac{n!}{k!(n-k)!}, & \quad \text{for $0\leq k\leq n,$}\\
  0, & \quad \text{otherwise.}\\ \end{array} \right. \]

In the two-dimensional lattice, let us extend the notion of step defined for polyominoes. Let us introduce a new step, the North-East step. It is a movement of one unit in $\vec{i}$-direction and in $\vec{j}$-direction.
The total number $D(n,m)$ of paths from $(0,0)$ to $(n,m)$ only made of North, East and North-East steps is called Delannoy Number and is equal to
$\sum_{k=0}^m \binom{m}{k}\binom{n+m-k}{m}$.
\\
These numbers are also be computed using the following recursion
$$D(n,m)=D(n-1,m)+D(n,m-1)+D(n-1,m-1).$$

The first values of $D(n,m)$ are given in Fig. \ref{TrD}. 
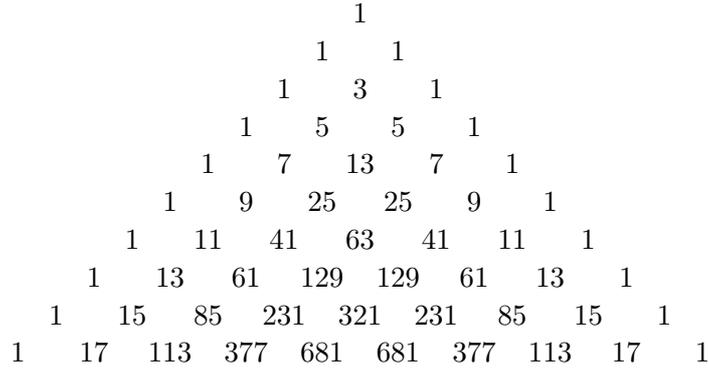
\begin{figure}[H]
\centering
\begin{tikzpicture}

\node at (0,0){1};
\node at (1,0){17};
\node at (2,0){113};
\node at (3,0){377};
\node at (4,0){681};
\node at (5,0){681};
\node at (6,0){377};
\node at (7,0){113};
\node at (8,0){17};
\node at (9,0){1};

\node at(0.5,0.5){1};
\node at (1.5,0.5){15};
\node at (2.5,0.5){85};
\node at(3.5,0.5){231};
\node at(4.5,0.5){321};
\node at(5.5,0.5){231};
\node at(6.5,0.5){85};
\node at(7.5,0.5){15};
\node at(8.5,0.5){1};

\node at(1,1){1};
\node at(2,1){13};
\node at(3,1){61};
\node at(4,1){129};
\node at(5,1){129};
\node at(6,1){61};
\node at(7,1){13};
\node at(8,1){1};

\node at(1.5,1.5){1};
\node at(2.5,1.5){11};
\node at(3.5,1.5){41};
\node at(4.5,1.5){63};
\node at(5.5,1.5){41};
\node at(6.5,1.5){11};
\node at(7.5,1.5){1};

\node at(2,2){1};
\node at(3,2){9};
\node at(4,2){25};
\node at(5,2){25};
\node at(6,2){9};
\node at(7,2){1};

\node at(2.5,2.5){1};
\node at(3.5,2.5){7};
\node at(4.5,2.5){13};
\node at(5.5,2.5){7};
\node at(6.5,2.5){1};

\node at(3,3){1};
\node at(4,3){5};
\node at(5,3){5};
\node at(6,3){1};

\node at(3.5,3.5){1};
\node at(4.5,3.5){3};
\node at(5.5,3.5){1};

\node at(4,4){1};
\node at(5,4){1};

\node at(4.5,4.5){1};
\end{tikzpicture}
\caption{Tribonacci Triangle.}
\label{TrD}
\end{figure}

For more details about Delannoy Numbers, one can refer to \cite{ref20}. 
\section{Enumeration of directed plateau polycubes}
\label{EDP}
In order to enumerate plateau polycubes, we first characterize their projections onto the planes $(\vec{i},\vec{j})$ and $(\vec{i},\vec{k})$.\\
In \cite{ref2} (Section $5$), it is mentioned that the projection of a directed plateau polycube gives a directed column-convex polyominoes on each plane. Moreover, from any two directed column-convex polyominoes in these planes corresponds a unique directed plateau polycube (see Fig. \ref{pl}).

\begin{figure}[H]
\centering
\begin{tikzpicture}[scale=0.45]
\fill[color=gray!60](2,3,4)--(2,3,3)--(2,4,3)--(2,4,4)--cycle;
\fill[color=gray!60](2,3,3)--(2,3,2)--(2,4,2)--(2,4,3)--cycle;
\fill[color=gray!60](2,3,2)--(2,3,1)--(2,4,1)--(2,4,2)--cycle;

\draw(2,3,2)--(2,3,1)--(2,4,1)--(2,4,2)--cycle;

\fill[color=gray!60](3,4,4)--(3,4,3)--(3,5,3)--(3,5,4)--cycle;
\fill[color=gray!60](3,4,3)--(3,4,2)--(3,5,2)--(3,5,3)--cycle;
\fill[color=gray!60](3,5,4)--(3,5,3)--(3,6,3)--(3,6,4)--cycle;
\fill[color=gray!60](3,5,3)--(3,5,2)--(3,6,2)--(3,6,3)--cycle;
\fill[color=gray!60](2,6,4)--(2,6,3)--(2,7,3)--(2,7,4)--cycle;
\fill[color=gray!60](2,6,3)--(2,6,2)--(2,7,2)--(2,7,3)--cycle;
\fill[color=gray!60](2,6,2)--(2,6,1)--(2,7,1)--(2,7,2)--cycle;
\fill[color=gray!60](2,5,2)--(2,5,1)--(2,6,1)--(2,6,2)--cycle;

\draw(2,3,4)--(2,3,3)--(2,4,3)--(2,4,4)--cycle;
\draw(2,3,3)--(2,3,2)--(2,4,2)--(2,4,3)--cycle;

\draw(3,4,4)--(3,4,3)--(3,5,3)--(3,5,4)--cycle;
\draw(3,4,3)--(3,4,2)--(3,5,2)--(3,5,3)--cycle;
\draw(3,5,4)--(3,5,3)--(3,6,3)--(3,6,4)--cycle;
\draw(3,5,3)--(3,5,2)--(3,6,2)--(3,6,3)--cycle;
\draw(2,6,4)--(2,6,3)--(2,7,3)--(2,7,4)--cycle;
\draw(2,6,3)--(2,6,2)--(2,7,2)--(2,7,3)--cycle;
\draw(2,6,2)--(2,6,1)--(2,7,1)--(2,7,2)--cycle;
\draw(2,5,2)--(2,5,1)--(2,6,1)--(2,6,2)--cycle;

\fill[color=gray!20](0,5,4)--(1,5,4)--(1,5,3)--(0,5,3)--cycle;
\fill[color=gray!20](1,7,4)--(2,7,4)--(2,7,3)--(1,7,3)--cycle;
\fill[color=gray!20](1,7,3)--(2,7,3)--(2,7,2)--(1,7,2)--cycle;
\fill[color=gray!20](1,7,2)--(2,7,2)--(2,7,1)--(1,7,1)--cycle;
\fill[color=gray!20](2,6,4)--(3,6,4)--(3,6,3)--(2,6,3)--cycle;
\fill[color=gray!20](2,6,3)--(3,6,3)--(3,6,2)--(2,6,2)--cycle;

\draw(0,5,4)--(1,5,4)--(1,5,3)--(0,5,3)--cycle;
\draw(1,7,4)--(2,7,4)--(2,7,3)--(1,7,3)--cycle;
\draw(1,7,3)--(2,7,3)--(2,7,2)--(1,7,2)--cycle;
\draw(1,7,2)--(2,7,2)--(2,7,1)--(1,7,1)--cycle;
\draw(2,6,4)--(3,6,4)--(3,6,3)--(2,6,3)--cycle;
\draw(2,6,3)--(3,6,3)--(3,6,2)--(2,6,2)--cycle;

\fill[color=gray!40](0,3,13)--(1,3,13)--(1,4,13)--(0,4,13)--cycle;
\fill[color=gray!40](1,3,13)--(2,3,13)--(2,4,13)--(1,4,13)--cycle; 
\fill[color=gray!40](0,4,13)--(1,4,13)--(1,5,13)--(0,5,13)--cycle;
\fill[color=gray!40](1,4,13)--(2,4,13)--(2,5,13)--(1,5,13)--cycle; 
\fill[color=gray!40](1,5,13)--(2,5,13)--(2,6,13)--(1,6,13)--cycle; 
\fill[color=gray!40](2,4,13)--(3,4,13)--(3,5,13)--(2,5,13)--cycle; 
\fill[color=gray!40](2,5,13)--(3,5,13)--(3,6,13)--(2,6,13)--cycle;
\fill[color=gray!40](1,6,13)--(2,6,13)--(2,7,13)--(1,7,13)--cycle;

\fill[color=gray!40](0,0,4)--(1,0,4)--(1,0,3)--(0,0,3)--cycle;
\fill[color=gray!40](1,0,4)--(2,0,4)--(2,0,3)--(1,0,3)--cycle;
\fill[color=gray!40](1,0,3)--(2,0,3)--(2,0,2)--(1,0,2)--cycle;
\fill[color=gray!40](1,0,2)--(2,0,2)--(2,0,1)--(1,0,1)--cycle;
\fill[color=gray!40](2,0,4)--(3,0,4)--(3,0,3)--(2,0,3)--cycle;
\fill[color=gray!40](2,0,3)--(3,0,3)--(3,0,2)--(2,0,2)--cycle;

\fill[color=gray!40](0,3,4)--(1,3,4)--(1,4,4)--(0,4,4)--cycle;
\fill[color=gray!40](1,3,4)--(2,3,4)--(2,4,4)--(1,4,4)--cycle; 
\fill[color=gray!40](0,4,4)--(1,4,4)--(1,5,4)--(0,5,4)--cycle;
\fill[color=gray!40](1,4,4)--(2,4,4)--(2,5,4)--(1,5,4)--cycle; 
\fill[color=gray!40](1,5,4)--(2,5,4)--(2,6,4)--(1,6,4)--cycle; 
\fill[color=gray!40](2,4,4)--(3,4,4)--(3,5,4)--(2,5,4)--cycle; 
\fill[color=gray!40](2,5,4)--(3,5,4)--(3,6,4)--(2,6,4)--cycle;
\fill[color=gray!40](1,6,4)--(2,6,4)--(2,7,4)--(1,7,4)--cycle;

\draw (0,3,13)--(1,3,13)--(1,4,13)--(0,4,13)--cycle;
\draw(1,3,13)--(2,3,13)--(2,4,13)--(1,4,13)--cycle; 
\draw(0,4,13)--(1,4,13)--(1,5,13)--(0,5,13)--cycle;
\draw(1,4,13)--(2,4,13)--(2,5,13)--(1,5,13)--cycle; 
\draw(1,5,13)--(2,5,13)--(2,6,13)--(1,6,13)--cycle; 
\draw(2,4,13)--(3,4,13)--(3,5,13)--(2,5,13)--cycle; 
\draw(2,5,13)--(3,5,13)--(3,6,13)--(2,6,13)--cycle;
\draw(1,6,13)--(2,6,13)--(2,7,13)--(1,7,13)--cycle;

\draw(0,0,4)--(1,0,4)--(1,0,3)--(0,0,3)--cycle;
\draw(1,0,4)--(2,0,4)--(2,0,3)--(1,0,3)--cycle;
\draw(1,0,3)--(2,0,3)--(2,0,2)--(1,0,2)--cycle;
\draw(1,0,2)--(2,0,2)--(2,0,1)--(1,0,1)--cycle;
\draw(2,0,4)--(3,0,4)--(3,0,3)--(2,0,3)--cycle;
\draw(2,0,3)--(3,0,3)--(3,0,2)--(2,0,2)--cycle;

\draw(0,3,4)--(1,3,4)--(1,4,4)--(0,4,4)--cycle;
\draw(1,3,4)--(2,3,4)--(2,4,4)--(1,4,4)--cycle; 
\draw(0,4,4)--(1,4,4)--(1,5,4)--(0,5,4)--cycle;
\draw(1,4,4)--(2,4,4)--(2,5,4)--(1,5,4)--cycle; 
\draw(1,5,4)--(2,5,4)--(2,6,4)--(1,6,4)--cycle; 
\draw(2,4,4)--(3,4,4)--(3,5,4)--(2,5,4)--cycle; 
\draw(2,5,4)--(3,5,4)--(3,6,4)--(2,6,4)--cycle;
\draw(1,6,4)--(2,6,4)--(2,7,4)--(1,7,4)--cycle;
\end{tikzpicture}
\caption{Directed plateau polycube of width 3 and lateral area 14.}
\label{pl}
\end{figure}
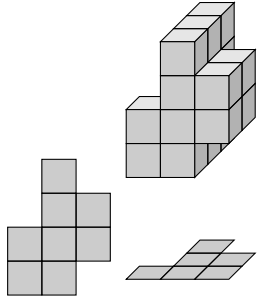

The number of directed column-convex polyominoes is known and is given in the following lemma,
\begin{lemme}
\label{l3_1}{\normalfont{(\cite{ref9})}}
For $1\leq k \leq n$, the number of directed column-convex polyominoes having $k$ columns and an area $n$ is $$\binom{n+k-2}{n-k}.$$
\end{lemme}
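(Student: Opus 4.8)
The plan is to encode a directed column-convex polyomino with $k$ columns by the sequence of its column heights together with the relative offsets of consecutive columns, and then to reduce the statement to a short generating-function (or bijective) computation.

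First I would establish the following normal form. Column-convexity means that column $i$ is a single interval of cells, say occupying rows $[a_i,\,a_i+h_i-1]$ with $h_i\ge 1$. Since East steps never decrease the column index and North steps never decrease the row index inside a fixed column, the root of a directed polyomino must be the bottommost cell of the leftmost column; a short induction on the column index then shows that every cell is reachable from that root if and only if $a_i\le a_{i+1}\le a_i+h_i-1$ for $1\le i\le k-1$, and that these inequalities simultaneously force the edge-connectivity of consecutive columns. Setting $d_i=a_{i+1}-a_i$, the constraints read $0\le d_i\le h_i-1$. Hence, after normalizing $a_1=0$, the directed column-convex polyominoes with $k$ columns and area $n$ are exactly the tuples $(h_1,\dots,h_k)\in(\mathbb{Z}_{\ge 1})^k$ with $\sum_i h_i=n$, each equipped with integers $d_1,\dots,d_{k-1}$ satisfying $0\le d_i\le h_i-1$. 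I expect this characterization to be the main obstacle, as it is the only place where one must reason about the geometry rather than manipulate sums.

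Second, for a fixed composition $h_1+\cdots+h_k=n$ there are exactly $h_i$ admissible values of $d_i$ for each $i<k$, so the number to be computed is
\[
N(n,k)\;=\;\sum_{\substack{h_1+\cdots+h_k=n\\ h_i\ge 1}} h_1 h_2\cdots h_{k-1}.
\]
To evaluate it I would pass to ordinary generating functions: since $\sum_{h\ge 1} h\,x^h = x/(1-x)^2$ and $\sum_{h\ge 1} x^h = x/(1-x)$, the generating function of $N(n,k)$ in $n$ is the product of $k-1$ copies of the former and one copy of the latter, i.e. $x^k/(1-x)^{2k-1}$. Extracting the coefficient of $x^n$ gives $N(n,k)=[x^{\,n-k}](1-x)^{-(2k-1)}=\binom{(n-k)+(2k-2)}{2k-2}=\binom{n+k-2}{n-k}$, and $1\le k\le n$ is precisely the range where this binomial is nonzero, which matches the statement.

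Alternatively one can avoid generating functions altogether: the choice $d_i=p_i-1$ with $p_i\in\{1,\dots,h_i\}$ splits the $i$-th part into a block of size $p_i-1\ge 0$ followed by a block of size $h_i-p_i+1\ge 1$, so a configuration becomes a sequence of $2k-1$ blocks (the $k-1$ ``left'' blocks of size $\ge 0$ alternating with the $k$ remaining blocks of size $\ge 1$) summing to $n$; subtracting $1$ from each of the $k$ large blocks yields a weak composition of $n-k$ into $2k-1$ parts, and these number $\binom{(n-k)+(2k-2)}{2k-2}=\binom{n+k-2}{n-k}$. I would keep whichever of the two arguments reads more smoothly in context.
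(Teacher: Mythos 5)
Your proof is correct. Note that the paper itself gives no argument for this lemma: it is simply imported from Delest--Dulucq \cite{ref9}, so there is no internal proof to compare against. Your encoding is sound on the one genuinely geometric point, namely that a directed column-convex polyomino with columns occupying rows $[a_i,a_i+h_i-1]$ is exactly characterized by $a_i\le a_{i+1}\le a_i+h_i-1$ (the lower bound comes from directedness, since a path reaching the bottom cell of column $i+1$ must pass through column $i$ at some row $\ge a_i$ and rows never decrease afterwards; the upper bound is forced by edge-connectivity of consecutive columns, and conversely the staircase path through the cells $(i,a_{i+1})$ shows sufficiency). The resulting count $\sum h_1\cdots h_{k-1}$ over compositions of $n$ into $k$ positive parts, evaluated either via $\bigl(x/(1-x)^2\bigr)^{k-1}\cdot x/(1-x)=x^k/(1-x)^{2k-1}$ or via your block-splitting bijection, gives $\binom{n+k-2}{2k-2}=\binom{n+k-2}{n-k}$ as required. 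As a consistency check, the generating function $x^k/(1-x)^{2k-1}$ you obtain is precisely the one the paper later quotes from Barcucci et al.\ \cite{ref222} in the derivation of Proposition \ref{p3_1}, so your argument in fact supplies a self-contained proof of that quoted identity as well.
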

From Lemma \ref{l3_1} we deduce the following theorem.

\begin{theorem}
\label{t3_1}
Let $s_{k,n}$ denote the number of directed plateau polycubes of width $k$ and lateral area $n$. Then, for $k\geq 1$ and $n\geq 2k$, we have 
$$s_{k,n}=\sum_{i=k}^{n-k}\binom{i+k-2}{i-k}\binom{n-i+k-2}{n-i-k}.$$
\end{theorem}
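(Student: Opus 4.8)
The plan is to exploit the bijection recalled just before Lemma~\ref{l3_1}: a directed plateau polycube of width $k$ is in one-to-one correspondence with an ordered pair of directed column-convex polyominoes, one in the $(\vec{i},\vec{j})$-plane and one in the $(\vec{i},\vec{k})$-plane, \emph{both} of width $k$ (since projecting preserves the width), with no other compatibility constraint between the two. By Definition~\ref{d2_1}, the lateral area of the polycube is exactly the sum of the areas of these two projected polyominoes. Hence, if we let $a_{k,i}$ denote the number of directed column-convex polyominoes of width $k$ and area $i$, the number $s_{k,n}$ of directed plateau polycubes of width $k$ and lateral area $n$ is the Cauchy-type convolution
\[
s_{k,n}=\sum_{i}a_{k,i}\,a_{k,n-i},
\]
where $i$ ranges over all values for which both factors are nonzero.

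First I would make the bijection precise enough to justify the product/convolution structure: spell out that the projection map sends a directed plateau polycube to a pair of directed column-convex polyominoes (using the fact, stated in the excerpt, that a projection of a polycube is face-connected hence a polyomino, that it is column-convex because the strata are plateaus, and directed because the root projects to a valid root), and conversely that any such pair reconstructs a unique polycube — this is the content cited from \cite{ref2}. Then I would observe that width is an invariant of each projection, so both polyominoes in the pair have width $k$, and that ``lateral area $=$ area of first projection $+$ area of second projection'' is immediate from Definition~\ref{d2_1}. This reduces the theorem to the purely enumerative identity above.

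Next I would plug in Lemma~\ref{l3_1}: a directed column-convex polyomino with $k$ columns and area $i$ exists only for $i\ge k$ (you need at least one cell per column), and their number is $\binom{i+k-2}{i-k}$. Writing $i$ for the area of the first projection and $n-i$ for the area of the second, the constraints $i\ge k$ and $n-i\ge k$ give the summation range $k\le i\le n-k$, which is nonempty precisely when $n\ge 2k$; substituting the binomial formula for both factors yields exactly
\[
s_{k,n}=\sum_{i=k}^{n-k}\binom{i+k-2}{i-k}\binom{n-i+k-2}{n-i-k},
\]
as claimed. For $k\ge 1$ and $n=2k$ the sum has the single term $i=k$, giving $s_{k,2k}=1$ (the plateau polycube which is a single $1\times k\times 1$ block, up to the two orientations collapsing), which is a useful sanity check.

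The only genuinely delicate point is the bijection itself — in particular verifying that \emph{every} pair of width-$k$ directed column-convex polyominoes really does arise from a (unique) directed plateau polycube, with no hidden obstruction forcing a relation between the two profiles. Since each stratum of the polycube sits in a fixed $\vec{i}$-slice and is a plateau, i.e.\ a rectangle determined independently by its $\vec{j}$-extent (read off the $(\vec{i},\vec{j})$-projection) and its $\vec{k}$-extent (read off the $(\vec{i},\vec{k})$-projection), and face-connectivity plus the directed condition between consecutive strata are governed separately in the two coordinate directions, the two profiles are indeed free; I would cite \cite{ref1,ref2} for the full verification rather than reproving it, and then the counting argument above closes the proof. $\blacksquare$
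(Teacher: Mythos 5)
Your proposal is correct and follows essentially the same route as the paper: reduce to the bijection with ordered pairs of directed column-convex polyominoes of width $k$ (cited from the literature), note that lateral area is the sum of the two projected areas, and convolve the counts from Lemma~\ref{l3_1} over the range $k\le i\le n-k$. Your additional care about the summation bounds and the validity of the bijection only makes explicit what the paper leaves implicit.
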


\begin{proof} 
By definition, the lateral area of a directed plateau polycube is the sum of the areas of the two directed column-convex polyominoes obtained from its projection. So, if a polycube has a width $k$ and a lateral area $n$, then both polyominoes obtained have $k$ columns. If the area of one of them is $i$ then the area of the other is $n-i$. 
Lemma \ref{l3_1} gives that the number of directed column-convex polyominoes having an area $i$ and $k$ columns is equal to $\binom{i+k-2}{i-k}$. Therefore, the number of plateau polycubes having a lateral area $n$ and width $k$ and whose projections on $(\vec{i},\vec{j})$ give a polyomino of area $i$ and the projections on $(\vec{i},\vec{k})$ give a polyomino of area $n-i$ is equal to $\binom{i+k-2}{i-k}\binom{n-i+k-2}{n-i-k}$. By summing all possible values of $i$, we obtain the formula.
\end{proof}

\noindent
From Theorem \ref{t3_1} and binomial coefficient properties, we deduce the following explicit expression.
\begin{theorem}
\label{t3_2} 
Let $k\geq 1$ and $n\geq 2k$,
$$s_{k,n}=\binom{n+2k-3}{n-2k}.$$
\end{theorem}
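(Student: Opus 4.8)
The plan is to start from the summation formula of Theorem \ref{t3_1},
\[
s_{k,n}=\sum_{i=k}^{n-k}\binom{i+k-2}{i-k}\binom{n-i+k-2}{n-i-k},
\]
and reindex so that the summation variable runs over consecutive integers starting at $0$. Setting $j=i-k$, the first factor becomes $\binom{j+2k-2}{j}$ and the second becomes $\binom{(n-2k-j)+2k-2}{n-2k-j}$, so that with $N:=n-2k$ we get
\[
s_{k,n}=\sum_{j=0}^{N}\binom{j+2k-2}{j}\binom{(N-j)+2k-2}{N-j}.
\]
This is a Vandermonde-type convolution of the coefficients of $(1-x)^{-(2k-1)}$ with themselves.

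The key step is then to recognize the right-hand side as the coefficient of $x^N$ in a product of generating functions. Since $\sum_{j\ge 0}\binom{j+2k-2}{j}x^j=(1-x)^{-(2k-1)}$, the Cauchy product gives
\[
\sum_{j=0}^{N}\binom{j+2k-2}{j}\binom{(N-j)+2k-2}{N-j}=[x^N]\,(1-x)^{-(2k-1)}\cdot(1-x)^{-(2k-1)}=[x^N]\,(1-x)^{-(4k-2)}.
\]
Extracting this coefficient yields $\binom{N+4k-3}{N}=\binom{N+4k-3}{4k-3}$, and substituting $N=n-2k$ gives $\binom{n+2k-3}{n-2k}$, which is exactly the claimed closed form. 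Alternatively, one may avoid generating functions entirely and invoke the Vandermonde/Chu--Vandermonde identity $\sum_{j=0}^{N}\binom{a+j}{j}\binom{b+N-j}{N-j}=\binom{a+b+N+1}{N}$ with $a=b=2k-2$, which produces $\binom{4k-3+N}{N}$ directly; a short induction on $N$ using Pascal's rule establishes this identity if one prefers a self-contained argument.

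The main obstacle — really the only point requiring care — is bookkeeping with the binomial-coefficient conventions: one must check that the reindexing respects the support of the sum (the convention $\binom{n}{k}=0$ outside $0\le k\le n$ stated in the preliminaries ensures no spurious terms appear, and that the constraint $n\ge 2k$, i.e.\ $N\ge 0$, is exactly what makes the sum nonempty), and that the final coefficient extraction uses the negative-binomial expansion in the form $[x^m](1-x)^{-r}=\binom{m+r-1}{m}$ with the correct value of $r$. Once these indices are tracked correctly, the identification is immediate and the theorem follows.
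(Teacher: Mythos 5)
Your proof is correct and follows essentially the same route as the paper: reindex the sum from Theorem \ref{t3_1} via $j=i-k$, $N=n-2k$, and collapse the resulting convolution $\sum_{j=0}^{N}\binom{j+2k-2}{j}\binom{N-j+2k-2}{N-j}$ by the Vandermonde-type identity to get $\binom{N+4k-3}{N}=\binom{n+2k-3}{n-2k}$. The only difference is cosmetic: the paper cites the identity from the literature, whereas you additionally justify it by multiplying the negative-binomial series $(1-x)^{-(2k-1)}$ with itself, which is a perfectly valid (and self-contained) way to obtain the same step.
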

\begin{proof}
From Theorem \ref{t3_1}, we have
\begin{eqnarray*}
s_{k,n} &=&\sum_{i=k}^{n-k}\binom{i+k-2}{i-k}\binom{n-i+k-2}{n-i-k}. \nonumber
\end{eqnarray*}
\noindent
Setting $j=i-k$, $m=n-2k$ and $a=2k-2$, we obtain\\
\noindent
\begin{eqnarray*}
s_{k,n}=\sum_{j=0}^{m}\binom{j+a}{j}\binom{a+m-j}{m-j}.\nonumber
\end{eqnarray*}
\noindent
A variant of Vandermonde convolution \cite{ref23} is
$$\sum_{j=0}^{m}\binom{j+a}{j}\binom{a+m-j}{m-j}=\binom{2a+m+1}{m}.$$
\noindent
By replacing $a$, $j$ and $m$ by their values, we get the formula.
\end{proof}
\noindent
\\
From this result, we can deduce a formula for $S_{k}(t)$, the \textit{generating function} for the directed plateau polycubes of width $k$ according to the lateral area where $$S_{k}(t)=\sum_{n\geq 0}s_{k,n}t^n.$$ 
So,
\begin{align*}
S_{k}(t)&=\sum_{n\geq 0}\sum_{i=k}^{n-k}\binom{i+k-2}{i-k}\binom{n-i+k-2}{n-i-k}t^n. 
\end{align*}
By convention, $S_0(t)=0$.\\
For $i \leq k$ and $i \geq n-k$, $s_{k,n}=0$. Therefore
\begin{align*}
S_{k}(t)&=\sum_{n\geq 0}\sum_{i=0}^{n}\binom{i+k-2}{i-k}\binom{n-i+k-2}{n-i-k}t^n \\ \\
&=\bigg(\sum_{m\geq 0}\binom{m+k-2}{m-k}t^m\bigg)\bigg(\sum_{l\geq 0}\binom{l+k-2}{l-k}t^l\bigg).
\end{align*}

\noindent
It is proved by, Barcucci et \textit{al.}, \cite{ref222} that the generating function of directed column-convex polyominoes having exactly $k$ columns is 
$$\sum_{n\geq 0}\binom{n+k-2}{n-k}t^n=\frac{t^k}{(1-t)^{2k-1}}.$$ 
Therefore,
\begin{proposition}
\label{p3_1} For $k\geq 1$, we have
\begin{align*}
S_{k}(t)&=\frac{t^{2k}}{(1-t)^{4k-2}}.\\ 
\end{align*}
\end{proposition}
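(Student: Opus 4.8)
The plan is to build on the factorization of $S_k(t)$ already reached just above the statement, so that only one short computation remains. Recall that a directed plateau polycube of width $k$ is determined by the ordered pair of directed column-convex polyominoes obtained as its projections onto $(\vec{i},\vec{j})$ and onto $(\vec{i},\vec{k})$; both projections have exactly $k$ columns, and the lateral area of the polycube is by Definition \ref{d2_1} the sum of their two areas (this is exactly the bijective fact used in the proof of Theorem \ref{t3_1}). Hence $S_k(t)$ is the Cauchy product of the width-$k$ directed-column-convex generating function with itself. Setting $B_k(t)=\sum_{n\ge 0}\binom{n+k-2}{n-k}t^n$, the manipulation displayed before the statement — extending the inner index from $k\le i\le n-k$ to $0\le i\le n$, which changes nothing because $\binom{i+k-2}{i-k}=0$ for $i<k$ and $\binom{n-i+k-2}{n-i-k}=0$ for $i>n-k$ — gives $S_k(t)=B_k(t)^2$.

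Next I would substitute the closed form of Barcucci et al., $B_k(t)=\dfrac{t^k}{(1-t)^{2k-1}}$, and square it:
$$S_k(t)=\left(\frac{t^k}{(1-t)^{2k-1}}\right)^{\!2}=\frac{t^{2k}}{(1-t)^{4k-2}},$$
which is the asserted identity.

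As a cross-check one can bypass the product decomposition and read the same generating function straight off Theorem \ref{t3_2}: the substitution $n=m+2k$ turns $\sum_{n\ge 2k}\binom{n+2k-3}{n-2k}t^n$ into $t^{2k}\sum_{m\ge 0}\binom{m+4k-3}{m}t^m$, and the generalized binomial series $\sum_{m\ge 0}\binom{m+r-1}{m}t^m=(1-t)^{-r}$ with $r=4k-2$ again yields $t^{2k}(1-t)^{-(4k-2)}$.

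There is no genuinely hard step here: the substance has already been spent on establishing the product decomposition and on quoting the width-$k$ directed-column-convex formula. The only things that deserve an explicit line are the legitimacy of widening the summation range (so that the double sum is honestly a Cauchy product of the two series) and, if one wanted a fully self-contained argument, a proof of $\sum_{n\ge 0}\binom{n+k-2}{n-k}t^n=t^k/(1-t)^{2k-1}$ — obtained for instance by writing $n=m+k$ and invoking the generalized binomial series, or from the standard column-appending recurrence for directed column-convex polyominoes.
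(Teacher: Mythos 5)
Your proof is correct and follows the same route as the paper: the displayed manipulation before the proposition already exhibits $S_k(t)$ as the square of $\sum_{n\ge 0}\binom{n+k-2}{n-k}t^n$, and substituting the Barcucci--Pinzani--Sprugnoli closed form $t^k/(1-t)^{2k-1}$ and squaring gives $t^{2k}/(1-t)^{4k-2}$. Your cross-check via Theorem \ref{t3_2} and the generalized binomial series is a valid independent confirmation, but the main argument is essentially identical to the paper's.
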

\begin{proof}
Let $$S(x,t):=\sum_{k\geq 1}S_{k}(t)x^k,$$ be the generating function of directed plateau polycubes according to the lateral area (coded by $t$) and the width (coded by $x$) then,
$$S(x,t)=\frac{xt^2(1-t)^2}{(1-t)^4-xt^2}.$$
For $x=1$ in $S(x,t)$ we obtain the following result.
\end{proof}
\begin{theorem}
\label{t3_3}
An expression of, $S(t)$, the generating function of directed plateau polycubes according to the lateral area is
$$S(t)=\frac{t^2(1-t)^2}{(1-t)^4-t^2}.$$
\end{theorem}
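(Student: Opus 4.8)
The plan is to obtain $S(t)$ as the specialization at $x=1$ of the bivariate series $S(x,t)=\sum_{k\ge 1}S_k(t)x^k$ introduced in the proof of Proposition~\ref{p3_1}. First I would substitute the closed form of Proposition~\ref{p3_1}, namely $S_k(t)=t^{2k}/(1-t)^{4k-2}$, into that definition, which gives
$$S(x,t)=\sum_{k\ge 1}\frac{t^{2k}}{(1-t)^{4k-2}}\,x^k=(1-t)^2\sum_{k\ge 1}\left(\frac{xt^2}{(1-t)^4}\right)^{k}.$$
The inner sum is a geometric series with ratio $r=xt^2/(1-t)^4$, hence equals $r/(1-r)$; clearing denominators yields
$$S(x,t)=\frac{xt^2(1-t)^2}{(1-t)^4-xt^2},$$
which is exactly the identity asserted within the proof of Proposition~\ref{p3_1}.

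Next I would justify the substitution $x=1$ at the level of formal power series in $t$. Since $S_k(t)$ is divisible by $t^{2k}$, for each fixed $n$ only the finitely many indices $k\le n/2$ contribute to the coefficient of $t^n$ in $\sum_{k\ge 1}S_k(t)x^k$; thus that coefficient is a genuine polynomial in $x$, and evaluating at $x=1$ recovers $S(t)=\sum_{k\ge 1}S_k(t)$, the generating function counting directed plateau polycubes of arbitrary width by lateral area. Setting $x=1$ in the closed form then gives
$$S(t)=\frac{t^2(1-t)^2}{(1-t)^4-t^2},$$
as claimed.

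I do not expect a genuine obstacle here: the only points requiring care are the legitimacy of summing the geometric series (equivalently, that $xt^2/(1-t)^4$ is a power series with zero constant term, so $1/(1-r)$ expands) and the summability of the width-indexed family $\{S_k(t)\}_{k\ge 1}$ needed to make the $x=1$ specialization meaningful; both follow immediately from the $t^{2k}$-valuation of $S_k(t)$. An equivalent route, bypassing $S(x,t)$ altogether, is to compute $\sum_{k\ge 1}t^{2k}/(1-t)^{4k-2}$ directly as a geometric series, which is the same one-line calculation.
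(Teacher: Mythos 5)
Your proposal is correct and follows essentially the same route as the paper: the paper's proof of Proposition~\ref{p3_1} likewise forms $S(x,t)=\sum_{k\ge 1}S_k(t)x^k$, sums the geometric series to get $S(x,t)=\frac{xt^2(1-t)^2}{(1-t)^4-xt^2}$, and then sets $x=1$. Your added remarks on the $t^{2k}$-valuation justifying the formal summation and the specialization $x=1$ are a welcome (if minor) extra precision over the paper's terse argument.
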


\section{Enumeration of plateau polycubes}
\label{EP}
As for the directed case, we have at first to characterize their projections.\\
It is known, from \cite{ref2}, that a plateau polycube can be obtained from two column-convex polyominoes and their number of columns is equal to the width of the polycube, (see Fig. \ref{pl}).\\

An immediate consequence is Theorem \ref{t4_1}. To prove it, we use the following lemma,
\begin{lemme}{\normalfont(\cite{ref17})}
\label{l4_1}
For $1\leq k \leq n$, the number of column-convex polyominoes having $k$ columns and an area $n$ is
 $$\sum_{i,j\geq 0}\binom{k-i-1}{i}\binom{2k-j-2}{j}\binom{k-2i-1}{n-k-i-j}.$$
\end{lemme}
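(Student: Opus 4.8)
The plan is to reduce the statement to a weighted sum over column-height vectors, expand a product of linear factors, and identify the three binomials as, respectively, an independent-set count, a secondary-selection count, and a stars-and-bars term.

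First I would recall the standard description of a column-convex polyomino: up to translation it is determined by its vector of column heights $(h_1,\dots,h_k)$, every $h_i\ge 1$, together with the relative vertical positions of consecutive columns. Viewing columns $i$ and $i+1$ as integer intervals of lengths $h_i$ and $h_{i+1}$, edge-connectedness forces them to overlap in at least one row, and the number of admissible relative positions of two such intervals is exactly $h_i+h_{i+1}-1$. Fixing the bottom cell of the first column kills the translation freedom, so the number of column-convex polyominoes with prescribed heights is $\prod_{i=1}^{k-1}(h_i+h_{i+1}-1)$, whence the quantity to be computed equals $\sum_{h_1+\cdots+h_k=n,\ h_i\ge 1}\prod_{i=1}^{k-1}(h_i+h_{i+1}-1)$. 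Substituting $g_i=h_i-1\ge 0$ gives $g_1+\cdots+g_k=n-k$ and turns each factor into $1+g_i+g_{i+1}$, so the target becomes
\[
\sum_{\,g_1+\cdots+g_k=n-k\,}\ \prod_{i=1}^{k-1}\bigl(1+g_i+g_{i+1}\bigr),
\]
a sum with only nonnegative summands, so no cancellation can occur.

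Next I would expand the product over the $k-1$ factors. Choosing in factor $i$ one of the summands $1$, $g_i$, $g_{i+1}$ amounts to assigning to each edge of the path $1\!-\!2\!-\!\cdots\!-\!k$ a state among $\{\text{unused},\text{left},\text{right}\}$, and the exponent of $g_j$ in the resulting monomial is the number of edges pointing at $j$, hence $0$, $1$, or $2$. A column $j$ gets exponent $2$ exactly when the edge to its left points right and the edge to its right points left; this is impossible for the two end columns and cannot happen at two consecutive columns, so the exponent-$2$ columns form an independent set among the $k-2$ interior columns, which gives the factor $\binom{k-i-1}{i}$ (the number of $i$-element independent sets in a path of $k-2$ vertices) for the choice of such a set of size $i$. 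With those $i$ columns (hence $2i$ edges) fixed, I would sort the remaining edge-states and, for each exponent profile with $i$ squared variables and $j$ simply-marked variables, evaluate the inner sum $\sum_{\,g_1+\cdots+g_k=n-k\,}\prod_\ell g_\ell^{e_\ell}$. Since $\sum_{\sum g=M}\prod_{\ell\in A}g_\ell$ depends only on $|A|$ and is a single binomial coefficient, and a squared variable only shifts the parameters in a controlled way (its generating function $\sum_{g\ge 0}g^2x^g=x(1+x)/(1-x)^3$ contributing the extra factor $1+x$), this should produce the factor $\binom{2k-j-2}{j}$ from counting admissible completions and the stars-and-bars factor $\binom{k-2i-1}{n-k-i-j}$ from distributing the residual mass $n-k-i-j$. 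Summing over $i$ and $j$ then gives the claimed expression.

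The hard part will be the middle bookkeeping: proving that the count of admissible edge-completions, combined with the evaluation of the $\prod g^{e}$ sums, collapses \emph{exactly} to $\binom{2k-j-2}{j}\binom{k-2i-1}{n-k-i-j}$ rather than to some bulkier multiple sum, while correctly treating the two end columns (which can never be doubly hit) and the $1+x$ factors carried by the squared variables. A technically smoother alternative is the transfer-matrix / kernel method: running a column-by-column recursion that records the height of the last column turns $\prod_{i=1}^{k-1}(1+g_i+g_{i+1})$ into a functional equation for the bivariate generating function counting column-convex polyominoes by area and by number of columns, whose solution is an explicit rational function, after which $[y^k]$ and then $[x^n]$ come out via Vandermonde-type identities of the kind already used in Section \ref{EDP} (cf. \cite{ref23}). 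In any event the statement coincides with the result of \cite{ref17}, which is the reference invoked here.
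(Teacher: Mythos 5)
The first thing to say is that the paper does not prove this lemma at all: it is quoted from Fereti\'c \cite{ref17} with no internal argument, so there is nothing of the paper's to compare your route against. Your opening reduction is nonetheless the standard and correct one: a column-convex polyomino with column heights $h_1,\dots,h_k$ admits exactly $\prod_{i=1}^{k-1}(h_i+h_{i+1}-1)$ admissible vertical placements, so the count equals $\sum_{h_1+\cdots+h_k=n}\prod_{i=1}^{k-1}(h_i+h_{i+1}-1)$ (this reproduces Table \ref{t2}, e.g. $3+9+3+6+4+6=31=h_{3,5}$), and your identification of the exponent-$2$ columns with $i$-element independent sets in the path on the $k-2$ interior columns is a correct account of the factor $\binom{k-i-1}{i}$.

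The gap is everything after that, and it cannot be closed. First, the step you yourself flag as the hard part --- that the count of admissible edge-completions combined with the evaluation of $\sum_{\sum g=n-k}\prod_\ell g_\ell^{e_\ell}$ collapses exactly to $\binom{2k-j-2}{j}\binom{k-2i-1}{n-k-i-j}$ --- is only asserted (``this should produce''), and the number of completions with $j$ singly-hit columns depends on \emph{where} the doubly-hit columns sit, not merely on $i$ and $j$, so even the shape of the intermediate sum is not established. Second, and decisively, the target identity is false as printed, so no bookkeeping can reach it: for $k=2$, $n=3$ the displayed sum equals $\binom{1}{0}\binom{2}{0}\binom{1}{1}+\binom{1}{0}\binom{1}{1}\binom{1}{0}=2$, whereas $h_{2,3}=4$; for $k=1$ it reduces to $\binom{0}{n-1}$, which vanishes for every $n\geq 2$ although a single column of any height exists. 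Structurally, a stars-and-bars factor distributing the residual mass $n-k-i-j$ must carry that mass in its \emph{upper} entry and grow with $n$, while $\binom{k-2i-1}{n-k-i-j}$ annihilates every term once $n$ exceeds roughly $2k$, contradicting $h_{k,n}>0$ for all $n\geq k$. A sanity check against Table \ref{t2} would have caught this before any expansion was attempted. Carried through correctly, your method lands instead on the expression implicit in the paper's own generating function from the proof of Theorem \ref{t4_2}, namely $h_{k,n}=\sum_{i=0}^{k-1}D_{k-i-1,i}\binom{n+k-i-2}{2k-2}$ (indeed $D_{1,0}\binom{3}{2}+D_{0,1}\binom{2}{2}=4$); it is the statement of the lemma, i.e.\ its transcription from \cite{ref17}, that needs to be repaired, not your starting point.
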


\begin{theorem}
\label{t4_1}
Let $r_{k,m}$ denote the number of plateau polycubes of width $k$ and lateral area $m$. Then for $k\geq 1$ and $m\geq 2k$, we have following the convolution formula,
$$r_{k,m}=\sum_{i=k}^{m-k}\alpha_k(i)\alpha_k(m-i),$$

\noindent where \setlength\abovedisplayskip{0pt}
$$\alpha_{k}(u)=\sum_{i,j\geq 0}\binom{k-i-1}{i}\binom{2k-j-2}{j}\binom{k-2i-1}{u-k-i-j}.$$ 
\end{theorem}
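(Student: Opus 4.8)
The plan is to mirror the proof of Theorem~\ref{t3_1} exactly, replacing the enumeration of \emph{directed} column-convex polyominoes (Lemma~\ref{l3_1}) by the enumeration of general column-convex polyominoes (Lemma~\ref{l4_1}). The structural input we rely on is the correspondence recalled just above the statement: a plateau polycube of width $k$ is in bijection with an ordered pair of column-convex polyominoes, each having exactly $k$ columns, obtained as its projections onto the planes $(\vec{i},\vec{j})$ and $(\vec{i},\vec{k})$; and both projections share the width $k$ of the polycube. This is the three-dimensional analogue of the two-dimensional fact used before, and it is the only nontrivial ingredient.

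First I would recall that, by Definition~\ref{d2_1}, the lateral area of a plateau polycube is by definition the sum of the areas of these two projected polyominoes. So if the polycube has width $k$ and lateral area $m$, and if the projection onto $(\vec{i},\vec{j})$ has area $i$, then the projection onto $(\vec{i},\vec{k})$ has area $m-i$; both are column-convex with $k$ columns. Next I would invoke Lemma~\ref{l4_1}: the number of column-convex polyominoes with $k$ columns and area $u$ is precisely $\alpha_k(u)=\sum_{i,j\geq 0}\binom{k-i-1}{i}\binom{2k-j-2}{j}\binom{k-2i-1}{u-k-i-j}$. Hence the number of plateau polycubes of width $k$ and lateral area $m$ whose first projection has area $i$ is $\alpha_k(i)\,\alpha_k(m-i)$, since the two projections can be chosen independently and the pair determines the polycube uniquely.

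Then I would sum over all admissible $i$. A column-convex polyomino with $k$ columns has area at least $k$ (one cell per column), so both $i\geq k$ and $m-i\geq k$ are forced; equivalently $i$ ranges over $k\le i\le m-k$, which also explains the hypothesis $m\ge 2k$ (otherwise the range is empty and $r_{k,m}=0$, consistently with there being no such polycube). Summing $\alpha_k(i)\alpha_k(m-i)$ over this range yields exactly the claimed convolution formula $r_{k,m}=\sum_{i=k}^{m-k}\alpha_k(i)\alpha_k(m-i)$.

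The only genuine obstacle is the bijective statement itself — that projecting a plateau polycube onto the two planes $(\vec{i},\vec{j})$ and $(\vec{i},\vec{k})$ gives a pair of column-convex polyominoes with $k$ columns, and conversely that every such pair arises from a unique plateau polycube. But this is cited from \cite{ref2} and recalled in the text preceding the theorem, so in the write-up I would simply reference it rather than reprove it; everything else is the elementary product-and-sum counting argument above, identical in form to the proof of Theorem~\ref{t3_1}. One should also note that $\alpha_k(u)=0$ for $u<k$, so extending the summation range to $k\le i\le m-k$ (rather than leaving it open) introduces no spurious terms.
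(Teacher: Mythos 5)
Your proposal is correct and follows exactly the route the paper intends: the paper's own proof of this theorem is a one-line remark that it is ``similar to the one in Theorem \ref{t3_1}'', i.e.\ the same projection bijection from \cite{ref2} combined with Lemma \ref{l4_1} in place of Lemma \ref{l3_1} and a convolution over the area of the first projection. You have simply written out the details (including the range of $i$ and the vanishing of $\alpha_k(u)$ for $u<k$) that the paper leaves implicit.
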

\begin{proof}
The proof is similar to the one in Theorem \ref{t3_1}.
\end{proof}
\noindent
\\
From Theorem \ref{t4_1}, we deduce the generating function for a fixed width $k$.\\

\begin{theorem}
\label{t4_2}
Let $R_k(p)$ be the generating function of plateau polycubes of width $k$ according to the lateral area. Then,
$$R_k(p)=C_{k-1}(p)^2,$$
where $$C_k(p)=\frac{p^{k+1}}{(1-p)^{2k+1}}\sum_{i=0}^{k}D_{k-i,i}p^i.$$
\end{theorem}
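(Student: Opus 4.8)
The plan is to mirror exactly the structure used for the directed case in Proposition \ref{p3_1}, replacing the role of the directed column-convex generating function $t^k/(1-t)^{2k-1}$ by the generating function $C_{k-1}(p)$ for (general) column-convex polyominoes with $k$ columns, counted by area. First I would invoke Theorem \ref{t4_1}, which expresses $r_{k,m}$ as the convolution $\sum_{i} \alpha_k(i)\,\alpha_k(m-i)$ coming from the two column-convex projections onto the planes $(\vec{i},\vec{j})$ and $(\vec{i},\vec{k})$, both of which have $k$ columns. Translating this convolution into generating functions, it is immediate that $R_k(p) = \big(\sum_{u\ge 0}\alpha_k(u)\,p^u\big)^2$, so the whole theorem reduces to the single identity
\[
\sum_{u\ge 0}\alpha_k(u)\,p^u \;=\; C_{k-1}(p),
\]
i.e.\ to showing that the area generating function of column-convex polyominoes with $k$ columns equals $\dfrac{p^{k}}{(1-p)^{2k-1}}\sum_{i=0}^{k-1}D_{k-1-i,i}\,p^i$ (after the index shift $k\mapsto k-1$ in the statement of $C_k(p)$).

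The core computation is therefore to sum the closed form of Lemma \ref{l4_1},
\[
\alpha_k(u)=\sum_{i,j\ge 0}\binom{k-i-1}{i}\binom{2k-j-2}{j}\binom{k-2i-1}{u-k-i-j},
\]
against $p^u$. I would proceed term by term: fix $i$ and $j$, set $v=u-k-i-j$ as the free summation variable, and use the Newton binomial series $\sum_{v\ge 0}\binom{k-2i-1}{v}p^v = (1+p)^{k-2i-1}$ — wait, more carefully, since $\binom{k-2i-1}{v}$ is a finite-support upper index, $\sum_{v}\binom{k-2i-1}{v}p^{v}=(1+p)^{k-2i-1}$, and then the accumulated prefactor is $p^{k+i+j}$. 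This gives
\[
\sum_{u\ge 0}\alpha_k(u)p^u=\sum_{i\ge 0}\binom{k-i-1}{i}p^{k+i}(1+p)^{k-2i-1}\sum_{j\ge 0}\binom{2k-j-2}{j}p^{j}.
\]
The $j$-sum is a standard Fibonacci/Chebyshev-type generating function; one shows $\sum_{j\ge 0}\binom{2k-j-2}{j}p^j$ has a rational form with denominator a power of $(1-p)$ (this is essentially where the $(1-p)$ powers in $C_k$ come from), and the remaining $i$-sum $\sum_i \binom{k-i-1}{i}p^{k+i}(1+p)^{k-2i-1}$ is what produces the Delannoy-coefficient polynomial $\sum_{i=0}^{k-1}D_{k-1-i,i}p^i$, using the identity $D(n,m)=\sum_k\binom{m}{k}\binom{n+m-k}{m}$ recalled in the Preliminaries (the $(1+p)$ factor expands to generate the inner binomial, and the outer $\binom{k-i-1}{i}$ supplies the other). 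I would carry out this bookkeeping and then collect the two rational factors to land exactly on $C_{k-1}(p)$.

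I expect the main obstacle to be precisely the identification of the $i$-sum with $\sum_{i=0}^{k-1}D_{k-1-i,i}\,p^i$ and the reconciliation of the denominator exponent: one must check that combining $(1+p)^{k-2i-1}$ with the rational $j$-sum and the $p^{k+i}$ prefactor yields a denominator of exactly $(1-p)^{2k-1}$ (equivalently $(1-p)^{2(k-1)+1}$) and a numerator that is $p^{k}\sum_{i=0}^{k-1}D_{k-1-i,i}p^i$, with no leftover cross terms. This is a purely algebraic manipulation of hypergeometric-type sums, but the shifting of indices ($k$ versus $k-1$) and the need to match the $(1+p)$ powers against $(1-p)$ powers via a known generating-function identity for the $\binom{2k-j-2}{j}$ sum is the delicate point; alternatively, one can sidestep it by noting that Lemma \ref{l4_1}'s formula is itself derived in \cite{ref17} from a column-by-column decomposition whose generating function is manifestly of the claimed rational shape, in which case the theorem follows by simply transcribing that derivation into generating-function language and squaring. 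Either way, once $\sum_u\alpha_k(u)p^u=C_{k-1}(p)$ is established, the relation $R_k(p)=C_{k-1}(p)^2$ is immediate from the convolution in Theorem \ref{t4_1}.
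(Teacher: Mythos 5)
Your reduction is exactly the paper's: Theorem \ref{t4_1} turns $r_{k,m}$ into the self-convolution of $\alpha_k$, so $R_k(p)=\bigl(\sum_{u}\alpha_k(u)p^u\bigr)^2$, and everything hinges on identifying $\sum_{u}\alpha_k(u)p^u$ with $C_{k-1}(p)$. The paper does not derive this identity at all: its entire proof consists of citing \cite{ref2} for the fact that the area generating function of column-convex polyominoes with $k$ columns is $\frac{p^k}{(1-p)^{2k-1}}\sum_{i=0}^{k-1}D_{k-i-1,i}p^i$, and then squaring. So the ``sidestep'' you offer at the end (quote the known generating function and square) is in fact the whole published proof, and with that option included your argument is complete.

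Your primary route --- summing the closed form of Lemma \ref{l4_1} against $p^u$ --- would not close as sketched, however. With the paper's convention that $\binom{n}{k}=0$ unless $0\le k\le n$, all three sums in $\alpha_k(u)$ have finite support: $\sum_{j}\binom{2k-j-2}{j}p^j$ is a Fibonacci-type \emph{polynomial} (nonzero only for $j\le k-1$), not a rational function with denominator a power of $(1-p)$ as you assert, and the prefactor $\binom{k-i-1}{i}$ forces $k-2i-1\ge 0$, so $\sum_{v}\binom{k-2i-1}{v}p^v=(1+p)^{k-2i-1}$ is also a polynomial. Your term-by-term summation therefore yields a polynomial in $p$, which cannot equal $C_{k-1}(p)$, a rational function with a genuine pole at $p=1$ (already $C_0(p)=p/(1-p)$, reflecting one single-column polyomino of every area $n\ge 1$). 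The denominator $(1-p)^{2k-1}$ simply cannot be produced from these finite sums; either the binomials in Lemma \ref{l4_1} must be read with a different (generalized) convention, or one must return to the column-by-column derivation in \cite{ref17}/\cite{ref2}. In short: your skeleton is right and coincides with the paper's, but the computation you flag as the ``core'' step is the one that fails, and the citation you treat as a fallback is the actual proof.
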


\begin{proof}
The proof is the same as in Theorem \ref{t3_1}, the generating function of polyominoes with $k$ columns according to the area is \cite{ref2} 
$$\frac{p^k}{(1-p)^{2k-1}}\sum_{i=0}^{k-1}D_{k-i-1,i}p^i.$$

\end{proof}
\noindent
\\
Notice that, $C_k$ is the generating function of the Anti-Diagonal sequences lyning in the Tribonacci Triangle at level $k$; times $\frac{p^{k+1}}{(1-p)^{2k+1}}$ (see Fig. \ref{ADDT}).

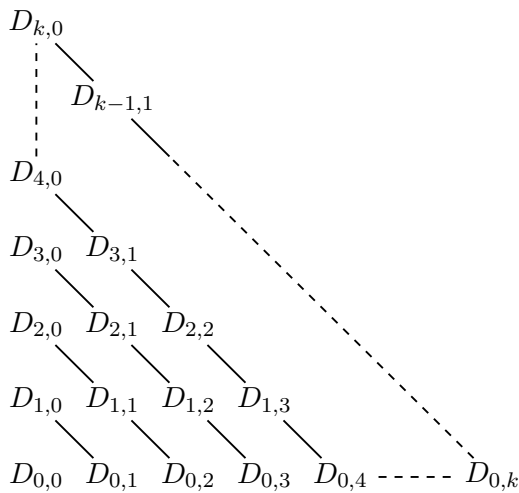
\begin{figure}[H]
\centering
\begin{tikzpicture}

\draw[line width=0.25mm](0.75,0.25)--(0.25,0.75);
\draw[line width=0.25mm](0.25,1.75)--(0.75,1.25);
\draw[line width=0.25mm](1.25,0.75)--(1.75,0.25);
\draw[line width=0.25mm](2.75,0.25)--(2.25,0.75);
\draw[line width=0.25mm](1.75,1.25)--(1.25,1.75);
\draw[line width=0.25mm](0.75,2.25)--(0.25,2.75);
\draw[line width=0.25mm](0.25,3.75)--(0.75,3.25);
\draw[line width=0.25mm](1.25,2.75)--(1.75,2.25);
\draw[line width=0.25mm](2.25,1.75)--(2.75,1.25);
\draw[line width=0.25mm](3.25,0.75)--(3.75,0.25);
\draw[line width=0.25mm,dashed](4.5,0)--(5.5,0);
\draw[line width=0.25mm,dashed](0,4.25)--(0,5.75);
\draw[line width=0.25mm](0.25,5.75)--(0.75,5.25);
\draw[line width=0.25mm,dashed](5.75,0.25)--(1.75,4.25);
\draw[line width=0.25mm](1.25,4.75)--(1.75,4.25);
\node at (0,0){$D_{0,0}$};
\node at (1,0){$D_{0,1}$};
\node at (2,0){$D_{0,2}$};
\node at (3,0){$D_{0,3}$};
\node at (4,0){$D_{0,4}$};
\node at (6,0){$D_{0,k}$};

\node at (0,1){$D_{1,0}$};
\node at (1,1){$D_{1,1}$};
\node at (2,1){$D_{1,2}$};
\node at (3,1){$D_{1,3}$};

\node at (0,2){$D_{2,0}$};
\node at (1,2){$D_{2,1}$};
\node at (2,2){$D_{2,2}$};

\node at (0,3){$D_{3,0}$};
\node at (1,3){$D_{3,1}$};

\node at (0,4){$D_{4,0}$};

\node at (0,6){$D_{k,0}$};
\node at (1,5){$D_{k-1,1}$};

\end{tikzpicture}
\caption{Anti-Diagonal transversals of the Tribonacci Triangle.}
\label{ADDT}
\end{figure}

\noindent

The first values of $r_{k,m}$ are given in Table \ref{t1}. From these values, we establish some asymptotic results.

\begin{sidewaystable}
\centering
\begin{tabular}{|P{1cm}|P{2cm}P{2cm}P{2cm}P{2cm}P{2cm}P{2cm}P{2cm}|}
\hline
$m \backslash k $  & 1& 2& 3&4&5&6&7\\ \hline
2 &\multicolumn{1}{P{2cm}:}{1} &0&0&0&0&0&0 \\
3&\multicolumn{1}{P{2cm}:}{2} &0&0&0&0&0&0\\ \cdashline{3-3}
4&3&\multicolumn{1}{P{2cm}:}{1} &0&0&0&0&0\\
5&4&\multicolumn{1}{P{2cm}:}{8} &0&0&0&0&0\\ \cdashline{4-4}
6&5&34&\multicolumn{1}{P{2cm}:}{1} &0&0&0&0\\
7&6&104&\multicolumn{1}{P{2cm}:}{16} &0&0&0&0\\\cdashline{5-5}
8&7&259&126&\multicolumn{1}{P{2cm}:}{1} &0&0&0\\
9&8&560&666&\multicolumn{1}{P{2cm}:}{24} &0&0&0\\\cdashline{6-6}
10&9&1092&2701&280&\multicolumn{1}{P{2cm}:}{1} &0&0\\
11&10&1968&9052&2152&\multicolumn{1}{P{2cm}:}{32} &0&0\\\cdashline{7-7}
12&11&3333&26257&12418&498&\multicolumn{1}{P{2cm}:}{1} &0\\
13&12&5368&68002&57922&5080&\multicolumn{1}{P{2cm}:}{40} &0\\\cdashline{8-8}
14&13&8294&160732&229048&38567&780&1\\
15&14&12376&352352&793144&234178&9960&48 \\
16&15&17927&725153&2462851&1191540&94318&1126\\
17&16&25312&1414348&6980624&5249012&710584&17304\\
16&17&34952&2633878&18309136&20506003&4457930&196953\\
17&18&47328&4711448&44921072&72354830&24048920&1778848\\
18&19&62985&8135078&103994372&233915707&114248221&13331808\\
19&20&82536&13613804&228782192&7008599688&486806272&85565538\\
20&21&106666&22155539&48109488&1964393375&1887595700&481457252\\
21&22&136136&35165504&971764880&5190268342&6738878720&2418499500\\
22&23&171787&54569064&1893273221&13010791823&22364636385&11003497968\\
23&24&214544&82963254&3570426344&31111765764&69550800504&45877909970\\ \hline

\end{tabular}

\caption{The first values of $r_{k,m}$.}
\label{t1}
\end{sidewaystable}

\section{Asymptotic Results}
\label{AR}
\noindent
In this section, we present results obtained combinatorially. They also can be obtained in algebraic way from Lemma \ref{l4_1} and Theorem \ref{t4_1}. 
\subsection{Column-convex polyominoes}
\noindent
Let $h_{k,n}$ be the number of column-convex polyominoes having $k$ columns and area $n$. The first values are given in Table \ref{t2}.
\begin{table}[H]
\begin{tabular}{|c|P{1cm}P{1cm}P{1cm}P{1cm}P{1cm}P{1cm}P{1cm}P{1cm}P{1cm}P{1cm}|}\hline
$n \backslash k$  & 1 & 2& 3&4&5&6&7&8&9&10 \\ \hline
1&1&0&0&0&0&0&0&0&0&0\\
2&1&1&0&0&0&0&0&0&0&0\\
3&1&4&1&0&0&0&0&0&0&0\\
4&1&9&8&1&0&0&0&0&0&0\\
5&1&16&31&12&1&0&0&0&0&0\\
6&1&25&85&68&16&1&0&0&0&0\\
7&1&36&190&260&121&20&1&0&0&0\\
8&1&49&371&777&604&190&24&1&0&0\\
9&1&64&658&1960&2299&1180&275&28&1&0\\
10&1&81&1086&4368&7221&5509&2052&376&32&1\\ \hline
\end{tabular}
\caption{The first values of $h_{k,n}$.}
\label{t2}

\end{table}

Here are given some specific values,
\begin{theorem}
\label{t5_1}
\mbox{}
\begin{enumerate}
\item $h_{k,k}=1$ for $k\geq 1$.
\item $h_{k,k+1}=4k-4$ for $k\geq2$.
\item $h_{k,k+2}=8k^2-19k+16$ for $k\geq 3$.
\end{enumerate}
\end{theorem}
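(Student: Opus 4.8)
The plan is to bypass the closed form of Lemma~\ref{l4_1} and argue directly from the geometry of column-convex polyominoes. Encode such a polyomino with $k$ columns by its sequence of column heights $(a_1,\dots,a_k)$, $a_i\ge 1$, read from left to right, together with the vertical offsets $d_i=b_{i+1}-b_i$ between the bottom rows of consecutive columns $C_i,C_{i+1}$. Recording a single interval per column makes column-convexity automatic, and $C_i$ and $C_{i+1}$ share an edge precisely when their intervals overlap, i.e. $1-a_{i+1}\le d_i\le a_i-1$, which allows exactly $a_i+a_{i+1}-1$ values of $d_i$. Fixing $b_1=0$ to account for translation and noting that the offsets are chosen independently, the first step is the master identity
\[
h_{k,n}=\sum_{\substack{a_1+\cdots+a_k=n\\ a_i\ge 1}}\ \prod_{i=1}^{k-1}(a_i+a_{i+1}-1),
\]
which I would sanity-check against a few entries of Table~\ref{t2}.

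Each of the three claims then reduces to summing this product over the handful of compositions of $n$ into $k$ positive parts that are almost entirely equal to $1$. For $n=k$ the only composition is $(1,\dots,1)$, every factor equals $1$, so $h_{k,k}=1$. For $n=k+1$ exactly one part equals $2$: if it occupies position $1$ or $k$ the product is $2$, and if it occupies one of the remaining $k-2$ interior positions it feeds two adjacent factors, each becoming $2$, so the product is $4$; hence $h_{k,k+1}=2\cdot2+(k-2)\cdot4=4k-4$ for $k\ge2$. For $n=k+2$ the relevant compositions split into those with one part equal to $3$ and those with two parts equal to $2$. A lone $3$ behaves just like the lone $2$ above but contributes a factor $3$ to each bond it touches, giving $2\cdot3+(k-2)\cdot9=9k-12$.

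The remaining family --- two columns of height $2$, all others of height $1$ --- is the delicate part, and I expect the casework here to be the main obstacle. One must distinguish whether the two height-$2$ columns are adjacent or not: an adjacent pair creates a single shared bond of weight $3$ together with neighbouring bonds of weight $2$, whereas a non-adjacent pair contributes the product of two independent local weights. Within each case one further splits according to how many of the two columns lie at an endpoint (local weight $2$) versus in the interior (local weight $4$, since two bonds are affected). Tallying the number of position-pairs in each bucket is an elementary but bookkeeping-heavy count over subsets of $\{1,\dots,k\}$ and of the interior positions $\{2,\dots,k-1\}$; it produces $8k^2-28k+28$ for $k\ge3$. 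Adding the two families gives $h_{k,k+2}=(9k-12)+(8k^2-28k+28)=8k^2-19k+16$, consistent with Table~\ref{t2}. As an independent cross-check, the same three formulas also fall out of Lemma~\ref{l4_1} by specialization, since for $n-k\in\{0,1,2\}$ only boundedly many summands of the triple sum survive.
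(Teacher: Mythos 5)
Your proposal is correct, and all three tallies check out against Table \ref{t2} (I verified the master identity for $h_{2,3},h_{2,4},h_{3,4},h_{3,5}$ and the final polynomials at $k=3,4,5$; your intermediate counts $9k-12$ and $8k^2-28k+28$ are also right). The route differs from the paper's in a useful way. The paper starts from the $1\times k$ bar and enumerates, purely pictorially, all ways to ``inject'' $i$ cells and re-glue, producing the case counts $3$, $9(k-2)$, $6$, $12(k-3)$, $4$, $8(k-3)$, $16\binom{k-3}{2}$ directly from figures. You instead make explicit the bond-weight identity
\[
h_{k,n}=\sum_{\substack{a_1+\cdots+a_k=n\\ a_i\ge 1}}\ \prod_{i=1}^{k-1}\bigl(a_i+a_{i+1}-1\bigr),
\]
which the paper never states but implicitly relies on (its ``$3$ insertions on the left, $3$ on the right'' is exactly the factor $a_i+a_{i+1}-1$ evaluated at heights $3$ and $1$). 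The two case analyses are then term-for-term isomorphic: your lone-$3$ contributions $2\cdot 3+(k-2)\cdot 9$ and your adjacent/non-adjacent two-$2$'s contributions $2\cdot6+(k-3)\cdot12$ and $4+2\cdot 8(k-3)+16\binom{k-3}{2}$ reproduce the paper's figures exactly. What your formulation buys is transparency and generality: the master identity reduces each claim to arithmetic over a finite list of compositions, makes the local-weight structure ($2$ at an endpoint, $4$ in the interior) explicit rather than read off from pictures, and extends mechanically to $n=k+i$ for larger $i$ --- which is essentially how the paper's Theorem \ref{t5_2} is then argued. The only part you leave compressed is the final tally $8k^2-28k+28$ for the two-$2$'s family; spelling out the three non-adjacent buckets ($1$ pair of weight $4$, $2(k-3)$ pairs of weight $8$, $\binom{k-2}{2}-(k-3)=\binom{k-3}{2}+\binom{k-4}{1}\cdot 0$... more simply $\tfrac{(k-3)(k-4)}{2}$ pairs of weight $16$) would complete it, and doing so confirms your stated total.
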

\begin{proof}
The proofs of these formulas are based on a unique principle. We start from the polyomino having area $k$ and $k$ columns. Next, we build all polyominoes having of area $k+i$ and $k$ columns by adding $i$ cells, cutting and gluing it. Thus, we only detail the proof of the value $h_{k,k+2}$.\\
Let us consider the polyomino of area $k$ and $k$ columns and let us enumerate all the ways to inject two cells to obtain column-convex polyomino. There are two main cases: 
\begin{itemize}
\item The two cells are inserted on the same column:
		\begin{itemize}
			\item When we insert cells in the first column, we have $3$ possibilities (see Fig. \ref{a1}).
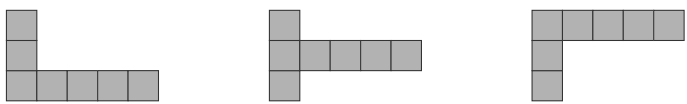
\begin{figure}[H]
\centering
\begin{subfigure}{.20\linewidth}
\begin{tikzpicture}[scale=.4,rotate=0]
			\begin{scope}[xshift = 1cm]
					\foreach \x/\y in 
					{0/0,0/1,0/2,1/0,2/0,3/0,4/0}
					{
						\cell{\x}{\y}
					}
				\end{scope}	
				
			\end{tikzpicture}
\end{subfigure} \hspace{0.000005cm}
\begin{subfigure}{.20\linewidth}
\begin{tikzpicture}[scale=.4,rotate=0]
			\begin{scope}[xshift = 1cm]
					\foreach \x/\y in 
					{0/0,0/1,0/2,1/1,2/1,3/1,4/1}
					{
						\cell{\x}{\y}
					}
				\end{scope}	
				
			\end{tikzpicture}
\end{subfigure} \hspace{0.000005cm}
\begin{subfigure}{.20\linewidth}
\begin{tikzpicture}[scale=.4,rotate=0]
			\begin{scope}[xshift = 1cm]
					\foreach \x/\y in 
					{0/0,0/1,0/2,1/2,2/2,3/2,4/2}
					{
						\cell{\x}{\y}
					}
				\end{scope}	
				
			\end{tikzpicture}
\end{subfigure}

\caption{Insertion of two cells in the first column.}
\label{a1}
\end{figure}

			\item  The insertion on the last column is similar to the previous subcase. We have $3$ possibilities.
			\item  In the last subcase there are $9(k-2)$ possibilities: we add a cell on $k-2$ different columns. And in an adding, we have $3$ insertions on the left and for each of them we have $3$ insertions on the right (see Fig. \ref{a2}).
			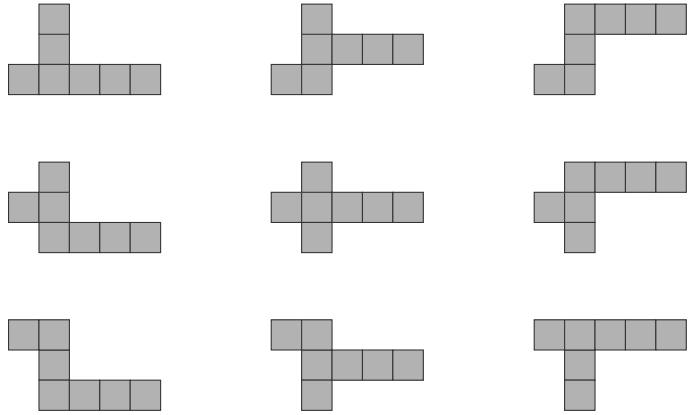
\begin{figure}[H]
\centering
  
\begin{subfigure}{.20\linewidth}
\begin{tikzpicture}[scale=.4,rotate=0]
			\begin{scope}[xshift = 6cm]
					\foreach \x/\y in 
					{0/0,1/1,1/2,1/0,2/0,3/0,4/0}
					{
						\cell{\x}{\y}
					}
				\end{scope}	
				
			\end{tikzpicture}
\end{subfigure} \hspace{0.000005cm}
\begin{subfigure}{.20\linewidth}
\begin{tikzpicture}[scale=.4,rotate=0]
			\begin{scope}[xshift = 6cm]
					\foreach \x/\y in 
					{0/0,1/1,1/2,1/0,2/1,3/1,4/1}
					{
						\cell{\x}{\y}
					}
				\end{scope}	
				
			\end{tikzpicture}
\end{subfigure} \hspace{0.000005cm}
\begin{subfigure}{.20\linewidth}
\begin{tikzpicture}[scale=.4,rotate=0]
			\begin{scope}[xshift = 6cm]
					\foreach \x/\y in 
					{0/0,1/1,1/2,1/0,2/2,3/2,4/2}
					{
						\cell{\x}{\y}
					}
				\end{scope}	
				
			\end{tikzpicture}
\end{subfigure}

\par\bigskip
\par\bigskip
\begin{subfigure}{.20\linewidth}
\begin{tikzpicture}[scale=.4,rotate=0]
			\begin{scope}[xshift = 6cm]
					\foreach \x/\y in 
					{0/1,1/1,1/2,1/0,2/0,3/0,4/0}
					{
						\cell{\x}{\y}
					}
				\end{scope}	
				
			\end{tikzpicture}
\end{subfigure} \hspace{0.000005cm}
\begin{subfigure}{.20\linewidth}
\begin{tikzpicture}[scale=.4,rotate=0]
			\begin{scope}[xshift = 6cm]
					\foreach \x/\y in 
					{0/1,1/1,1/2,1/0,2/1,3/1,4/1}
					{
						\cell{\x}{\y}
					}
				\end{scope}	
				
			\end{tikzpicture}
\end{subfigure} \hspace{0.000005cm}
\begin{subfigure}{.20\linewidth}
\begin{tikzpicture}[scale=.4,rotate=0]
			\begin{scope}[xshift = 6cm]
					\foreach \x/\y in 
					{0/1,1/1,1/2,1/0,2/2,3/2,4/2}
					{
						\cell{\x}{\y}
					}
				\end{scope}	
				
			\end{tikzpicture}
\end{subfigure}
\par\bigskip
\par\bigskip
\begin{subfigure}{.20\linewidth}
\begin{tikzpicture}[scale=.4,rotate=0]
			\begin{scope}[xshift = 6cm]
					\foreach \x/\y in 
					{0/2,1/1,1/2,1/0,2/0,3/0,4/0}
					{
						\cell{\x}{\y}
					}
				\end{scope}	
				
			\end{tikzpicture}
\end{subfigure} \hspace{0.000005cm} 
\begin{subfigure}{.20\linewidth}
\begin{tikzpicture}[scale=.4,rotate=0]
			\begin{scope}[xshift = 6cm]
					\foreach \x/\y in 
					{0/2,1/1,1/2,1/0,2/1,3/1,4/1}
					{
						\cell{\x}{\y}
					}
				\end{scope}	
				
			\end{tikzpicture}
\end{subfigure} \hspace{0.000005cm}
\begin{subfigure}{.20\linewidth}
\begin{tikzpicture}[scale=.4,rotate=0]
			\begin{scope}[xshift = 6cm]
					\foreach \x/\y in 
					{0/2,1/1,1/2,1/0,2/2,3/2,4/2}
					{
						\cell{\x}{\y}
					}
				\end{scope}	
				
			\end{tikzpicture}
\end{subfigure}
  \caption{Insertion of two cells in a middle column.}
  \label{a2}
\end{figure}

		\end{itemize}
\item The two cells are inserted on two different columns:		
		\begin{itemize}
		\item We add two cells on two successive columns:
			\begin{itemize}
			\item[$\blacktriangleright$] We insert a cell in the first and in the second columns, there is $6$ possibilities (see Fig. \ref{a8}).
			\item [$\blacktriangleright$]One cell is inserted on the last column and the other one on the before last column. It is similar to the previous case. We have $6$ possibilities.
			\begin{figure}[H]
\centering
  
\begin{subfigure}{.20\linewidth}
\begin{tikzpicture}[scale=.4,rotate=0]
			\begin{scope}[xshift = 6cm]
					\foreach \x/\y in 
					{0/0,0/1,1/0,1/1,2/0,3/0,4/0}
					{
						\cell{\x}{\y}
					}
				\end{scope}	
				
			\end{tikzpicture}
\end{subfigure} \hspace{0.000005cm}
\begin{subfigure}{.20\linewidth}
\begin{tikzpicture}[scale=.4,rotate=0]
			\begin{scope}[xshift = 6cm]
					\foreach \x/\y in 
					{0/0,0/1,1/0,1/1,2/1,3/1,4/1}
					{
						\cell{\x}{\y}
					}
				\end{scope}	
				
			\end{tikzpicture}
\end{subfigure} \hspace{0.000005cm}
\begin{subfigure}{.20\linewidth}
\begin{tikzpicture}[scale=.4,rotate=0]
			\begin{scope}[xshift = 6cm]
					\foreach \x/\y in 
					{0/0,0/1,1/1,1/2,2/1,3/1,4/1}
					{
						\cell{\x}{\y}
					}
				\end{scope}	
				
			\end{tikzpicture}
\end{subfigure}

\par\bigskip
\par\bigskip
\begin{subfigure}{.20\linewidth}
\begin{tikzpicture}[scale=.4,rotate=0]
			\begin{scope}[xshift = 6cm]
					\foreach \x/\y in 
					{0/0,0/1,1/1,1/2,2/2,3/2,4/2}
					{
						\cell{\x}{\y}
					}
				\end{scope}	
				
			\end{tikzpicture}
\end{subfigure} \hspace{0.000005cm}
\begin{subfigure}{.20\linewidth}
\begin{tikzpicture}[scale=.4,rotate=0]
			\begin{scope}[xshift = 6cm]
					\foreach \x/\y in 
					{0/0,0/1,1/0,1/-1,2/-1,3/-1,4/-1}
					{
						\cell{\x}{\y}
					}
				\end{scope}	
				
			\end{tikzpicture}
\end{subfigure} \hspace{0.000005cm}
\begin{subfigure}{.20\linewidth}
\begin{tikzpicture}[scale=.4,rotate=0]
			\begin{scope}[xshift = 6cm]
					\foreach \x/\y in 
					{0/0,0/1,1/0,1/-1,2/0,3/0,4/0}
					{
						\cell{\x}{\y}
					}
				\end{scope}	
				
			\end{tikzpicture}
\end{subfigure}
\caption{Insertion of two cells in the first and the second columns.}
  \label{a8}
\end{figure}
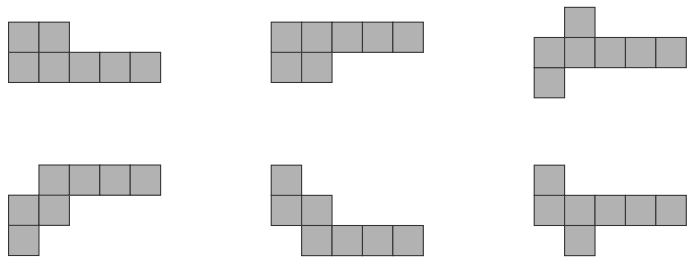
			\item[$\blacktriangleright$] The two cells are inserted in two different columns from the first and the last. We have in this subcase $12(k-3)$ possibilities (see Fig. \ref{a3}).
\begin{figure}[H]
\centering
  
\begin{subfigure}{.20\linewidth}
\begin{tikzpicture}[scale=.4,rotate=0]
			\begin{scope}[xshift = 1cm]
					\foreach \x/\y in 
					{0/0,1/0,1/1,2/0,2/1,3/0,4/0}
					{
						\cell{\x}{\y}
					}
				\end{scope}	
				
			\end{tikzpicture}
\end{subfigure} \hspace{0.000005cm}
\begin{subfigure}{.20\linewidth}
\begin{tikzpicture}[scale=.4,rotate=0]
			\begin{scope}[xshift = 1cm]
					\foreach \x/\y in 
				{0/0,1/0,1/1,2/0,2/1,3/1,4/1}
					{
						\cell{\x}{\y}
					}
				\end{scope}	
				
			\end{tikzpicture}
\end{subfigure} \hspace{0.000005cm}
\begin{subfigure}{.20\linewidth}
\begin{tikzpicture}[scale=.4,rotate=0]
			\begin{scope}[xshift = 1cm]
					\foreach \x/\y in 
					{0/1,1/0,1/1,2/0,2/1,3/0,4/0}
					{
						\cell{\x}{\y}
					}
				\end{scope}	
				
			\end{tikzpicture}
\end{subfigure}
\par\bigskip
\par\bigskip
\begin{subfigure}{.20\linewidth}
\begin{tikzpicture}[scale=.4,rotate=0]
			\begin{scope}[xshift = 1cm]
					\foreach \x/\y in 
					{0/1,1/0,1/1,2/0,2/1,3/1,4/1}
					{
						\cell{\x}{\y}
					}
				\end{scope}	
				
			\end{tikzpicture}
\end{subfigure}
\hspace{0.000005cm}
\begin{subfigure}{.20\linewidth}
\begin{tikzpicture}[scale=.4,rotate=0]
			\begin{scope}[xshift = 1cm]
					\foreach \x/\y in 
					{0/0,1/0,1/1,2/1,2/2,3/1,4/1}
					{
						\cell{\x}{\y}
					}
				\end{scope}	
				
			\end{tikzpicture}
\end{subfigure} \hspace{0.000005cm}
\begin{subfigure}{.20\linewidth}
\begin{tikzpicture}[scale=.4,rotate=0]
			\begin{scope}[xshift = 1cm]
					\foreach \x/\y in 
				{0/0,1/0,1/1,2/1,2/2,3/2,4/2}
					{
						\cell{\x}{\y}
					}
				\end{scope}	
				
			\end{tikzpicture}
\end{subfigure} 
\par\bigskip
\par\bigskip
\begin{subfigure}{.20\linewidth}
\begin{tikzpicture}[scale=.4,rotate=0]
			\begin{scope}[xshift = 1cm]
					\foreach \x/\y in 
					{0/1,1/0,1/1,2/1,2/2,3/1,4/1}
					{
						\cell{\x}{\y}
					}
				\end{scope}	
				
			\end{tikzpicture}
\end{subfigure}
\hspace{0.000005cm}
\begin{subfigure}{.20\linewidth}
\begin{tikzpicture}[scale=.4,rotate=0]
			\begin{scope}[xshift = 1cm]
					\foreach \x/\y in 
					{0/1,1/0,1/1,2/1,2/2,3/2,4/2}
					{
						\cell{\x}{\y}
					}
				\end{scope}	
				
			\end{tikzpicture}
\end{subfigure}
\hspace{0.000005cm}
\begin{subfigure}{.20\linewidth}
\begin{tikzpicture}[scale=.4,rotate=0]
			\begin{scope}[xshift = 1cm]
					\foreach \x/\y in 
					{0/0,1/0,1/1,2/0,2/-1,3/-1,4/-1}
					{
						\cell{\x}{\y}
					}
				\end{scope}	
				
			\end{tikzpicture}
\end{subfigure} 
\par\bigskip
\par\bigskip
\begin{subfigure}{.20\linewidth}
\begin{tikzpicture}[scale=.4,rotate=0]
			\begin{scope}[xshift = 1cm]
					\foreach \x/\y in 
				{0/0,1/0,1/1,2/0,2/0,3/0,4/0}
					{
						\cell{\x}{\y}
					}
				\end{scope}	
				
			\end{tikzpicture}
\end{subfigure} \hspace{0.000005cm}
\begin{subfigure}{.20\linewidth}
\begin{tikzpicture}[scale=.4,rotate=0]
			\begin{scope}[xshift = 1cm]
					\foreach \x/\y in 
					{0/1,1/0,1/1,2/0,2/-1,3/-1,4/-1}
					{
						\cell{\x}{\y}
					}
				\end{scope}	
				
			\end{tikzpicture}
\end{subfigure}
\hspace{0.000005cm}
\begin{subfigure}{.20\linewidth}
\begin{tikzpicture}[scale=.4,rotate=0]
			\begin{scope}[xshift = 1cm]
					\foreach \x/\y in 
					{0/1,1/0,1/1,2/0,2/-1,3/0,4/0}
					{
						\cell{\x}{\y}
					}
				\end{scope}	
				
			\end{tikzpicture}
\end{subfigure}
  \caption{Insertion of two cells in successive columns.}
  \label{a3}
\end{figure}
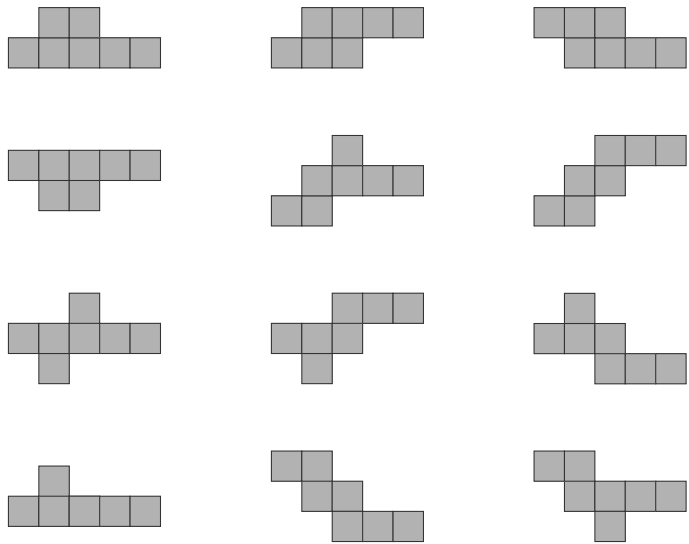

			\end{itemize}
		\item If we add two cells in two non-successive columns, we have the following subcases:
			\begin{itemize}
			\item[$\blacktriangleright$] One cell is added on the first column and the other one the last column. In this subcase, there is $4$ possibilities (see Fig. \ref{a5}).
			\begin{figure}[H]
\centering
 \begin{subfigure}{.20\linewidth}
\begin{tikzpicture}[scale=.4,rotate=0]
			\begin{scope}[xshift = 1cm]
					\foreach \x/\y in 
					{0/0,0/1,1/0,2/0,3/0,4/1,4/0}
					{
						\cell{\x}{\y}
					}
				\end{scope}	
							\end{tikzpicture}
\end{subfigure} \hspace{0.000005cm}
\begin{subfigure}{.20\linewidth}
\begin{tikzpicture}[scale=.4,rotate=0]
			\begin{scope}[xshift = 1cm]
					\foreach \x/\y in 
				{0/0,0/1,1/0,2/0,3/0,4/-1,4/0}
					{
						\cell{\x}{\y}
					}
				\end{scope}	
				
			\end{tikzpicture}
\end{subfigure} 
\par\bigskip
\par\bigskip
\begin{subfigure}{.20\linewidth}
\begin{tikzpicture}[scale=.4,rotate=0]
			\begin{scope}[xshift = 1cm]
					\foreach \x/\y in 
					{0/0,0/-1,1/0,2/0,3/0,4/1,4/0}
					{
						\cell{\x}{\y}
					}
				\end{scope}	
				
			\end{tikzpicture}
\end{subfigure} \hspace{0.000005cm}
\begin{subfigure}{.20\linewidth}
\begin{tikzpicture}[scale=.4,rotate=0]
			\begin{scope}[xshift = 1cm]
					\foreach \x/\y in 
				{0/0,0/-1,1/0,2/0,3/0,4/-1,4/0}
					{
						\cell{\x}{\y}
					}
				\end{scope}	
				
			\end{tikzpicture}
\end{subfigure}
\caption{Insertion of two cells in the first and last columns.}
  \label{a5} 
\end{figure}
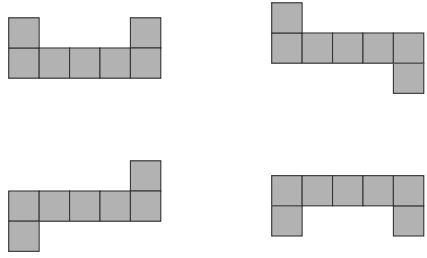
			\item[$\blacktriangleright$] If one cell is added on the first column and the other one on one of the columns in the middle different from the second, we have $8(k-3)$ possibilities, $2$ for adding a cell on the first column and for each adding we have $4(k-3)$ possibilities of adding a cell on a middle column different from the second (see Fig. \ref{a6}).
			\begin{figure}[H]
\centering
  
\begin{subfigure}{.20\linewidth}
\begin{tikzpicture}[scale=.4,rotate=0]
			\begin{scope}[xshift = 6cm]
					\foreach \x/\y in 
					{0/0,0/1,1/0,2/0,3/0,3/1,4/0}
					{
						\cell{\x}{\y}
					}
				\end{scope}	
				
			\end{tikzpicture}
\end{subfigure} \hspace{0.000005cm}
\begin{subfigure}{.20\linewidth}
\begin{tikzpicture}[scale=.4,rotate=0]
			\begin{scope}[xshift = 6cm]
					\foreach \x/\y in 
					{0/0,0/1,1/0,2/0,3/0,3/1,4/1}
					{
						\cell{\x}{\y}
					}
				\end{scope}	
				
			\end{tikzpicture}
\end{subfigure} \hspace{0.000005cm}
\begin{subfigure}{.20\linewidth}
\begin{tikzpicture}[scale=.4,rotate=0]
			\begin{scope}[xshift = 6cm]
					\foreach \x/\y in 
					{0/0,0/1,1/0,2/0,3/-1,3/0,4/-1}
					{
						\cell{\x}{\y}
					}
				\end{scope}	
				
			\end{tikzpicture}
\end{subfigure}

\par\bigskip
\par\bigskip
\begin{subfigure}{.20\linewidth}
\begin{tikzpicture}[scale=.4,rotate=0]
			\begin{scope}[xshift = 6cm]
					\foreach \x/\y in 
					{0/0,0/1,1/0,2/0,3/-1,3/0,4/-0}
					{
						\cell{\x}{\y}
					}
				\end{scope}	
				
			\end{tikzpicture}
\end{subfigure} \hspace{0.000005cm}
\begin{subfigure}{.20\linewidth}
\begin{tikzpicture}[scale=.4,rotate=0]
			\begin{scope}[xshift = 6cm]
					\foreach \x/\y in 
					{0/0,0/-1,1/0,2/0,3/0,3/1,4/0}
					{
						\cell{\x}{\y}
					}
				\end{scope}	
				
			\end{tikzpicture}
\end{subfigure} \hspace{0.000005cm}
\begin{subfigure}{.20\linewidth}
\begin{tikzpicture}[scale=.4,rotate=0]
			\begin{scope}[xshift = 6cm]
					\foreach \x/\y in 
					{0/0,0/-1,1/0,2/0,3/0,3/1,4/1}
					{
						\cell{\x}{\y}
					}
				\end{scope}	
				
			\end{tikzpicture}
\end{subfigure}
\par\bigskip
\par\bigskip
\begin{subfigure}{.20\linewidth}
\begin{tikzpicture}[scale=.4,rotate=0]
			\begin{scope}[xshift = 6cm]
					\foreach \x/\y in 
					{0/0,0/-1,1/0,2/0,3/-1,3/0,4/-1}
					{
						\cell{\x}{\y}
					}
				\end{scope}	
				
			\end{tikzpicture}
\end{subfigure}\hspace{0.000005cm}
\begin{subfigure}{.20\linewidth}
\begin{tikzpicture}[scale=.4,rotate=0]
			\begin{scope}[xshift = 6cm]
					\foreach \x/\y in 
					{0/0,0/-1,1/0,2/0,3/-1,3/0,4/0}
					{
						\cell{\x}{\y}
					}
				\end{scope}	
				
			\end{tikzpicture}
\end{subfigure}
  \caption{Insertion of two cells in the first and a middle column.}
  \label{a6}
\end{figure}
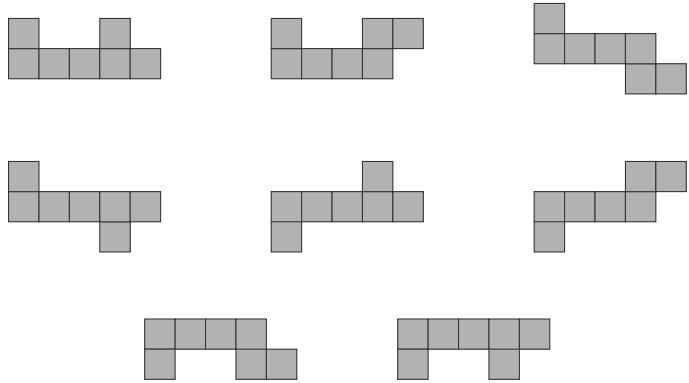
			\item[$\blacktriangleright$] One cell is added on the last column and the other on one of the columns in the middle different from the before last column. It is a similar to the previous subcase, so we have $8(k-3)$ possibilities.
			\item[$\blacktriangleright$] 
The subcase where the two cells are added on two column different from the first and the last ones, is equivalent to the tilling of a board of length $1\times (k-1)$ with two dominoes and $k-5$ squares (see Fig. \ref{tlp}).
\begin{figure}[H]
\centering
\begin{tikzpicture}[scale=.5,rotate=0]
   \foreach \x/\y in {0/0,1/0,2/0,3/0,4/0,5/0,6/0,7/0}{
     \cell{\x}{\y};
  }
   \fill[color=black!60](2,0)--(2,1)--(4,1)--(4,0)--cycle;
   \fill[color=black!60](5,0)--(5,1)--(7,1)--(7,0)--cycle;
   \draw(2,0)--(2,1)--(4,1)--(4,0)--cycle;
   \draw(5,0)--(5,1)--(7,1)--(7,0)--cycle;
   
   \begin{scope}[xshift = 10cm,yshift=0cm]
     \fill[color=black!50] (0,0) -- (2.8,0) -- (2.7,-.5) -- (3.5,.5) -- (2.7,1.5) -- (2.8,1) -- (0,1) -- cycle;
   \end{scope}
				
   \begin{scope}[xshift = 15cm]
     \foreach \x/\y in {0/0,1/0,2/0,2/1,3/0,4/0,5/0,5/1,6/0,7/0}{
     \cell{\x}{\y};
     }
   \end{scope}
 \end{tikzpicture}
 \caption{A tiling of a board and its equivalent polyomino.}
 \label{tlp}
\end{figure}
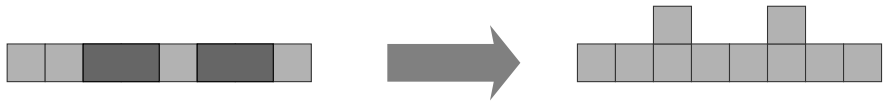
The number of $1\times n$ tilings using exactly $j$ dominoes is	
$$\binom{n-j}{j},\: (j=0,1,\cdots,\lfloor{n/2}\rfloor)\: \cite{refben}$$		
In this subcase $n=k-1$ and $j=2$. So we have $\binom{k-3}{2}$ possibilities of choosing two non-successive columns different from the first and the last ones. For each choice we have $16$ different constructions. Therefore, there are $16\binom{k-3}{2}$ possibilities (see Fig. \ref{a4}).		
			
			\begin{figure}[h]
\centering
 \begin{subfigure}{.20\linewidth}
\begin{tikzpicture}[scale=.4,rotate=0]
			\begin{scope}[xshift = 1cm]
					\foreach \x/\y in 
					{0/0,1/0,1/1,2/0,3/0,3/1,4/0}
					{
						\cell{\x}{\y}
					}
				\end{scope}	
				
			\end{tikzpicture}
\end{subfigure} \hspace{0.000005cm}
\begin{subfigure}{.20\linewidth}
\begin{tikzpicture}[scale=.4,rotate=0]
			\begin{scope}[xshift = 1cm]
					\foreach \x/\y in 
				{0/0,1/0,1/1,2/0,3/0,3/1,4/1}
					{
						\cell{\x}{\y}
					}
				\end{scope}	
				
			\end{tikzpicture}
\end{subfigure} \hspace{0.000005cm}
\begin{subfigure}{.20\linewidth}
\begin{tikzpicture}[scale=.4,rotate=0]
			\begin{scope}[xshift = 1cm]
					\foreach \x/\y in 
					{0/1,1/0,1/1,2/0,3/0,3/1,4/0}
					{
						\cell{\x}{\y}
					}
				\end{scope}	
				
			\end{tikzpicture}
\end{subfigure}
\par\bigskip
\par\bigskip
\begin{subfigure}{.20\linewidth}
\begin{tikzpicture}[scale=.4,rotate=0]
			\begin{scope}[xshift = 1cm]
					\foreach \x/\y in 
					{0/1,1/0,1/1,2/0,3/0,3/1,4/1}
					{
						\cell{\x}{\y}
					}
				\end{scope}	
				
			\end{tikzpicture}
\end{subfigure}
\hspace{0.000005cm}
\begin{subfigure}{.20\linewidth}
\begin{tikzpicture}[scale=.4,rotate=0]
			\begin{scope}[xshift = 1cm]
					\foreach \x/\y in 
					{0/0,1/0,1/1,2/1,3/0,3/1,4/0}
					{
						\cell{\x}{\y}
					}
				\end{scope}	
				
			\end{tikzpicture}
\end{subfigure} \hspace{0.000005cm}
\begin{subfigure}{.20\linewidth}
\begin{tikzpicture}[scale=.4,rotate=0]
			\begin{scope}[xshift = 1cm]
					\foreach \x/\y in 
				{0/0,1/0,1/1,2/1,3/0,3/1,4/1}
					{
						\cell{\x}{\y}
					}
				\end{scope}	
				
			\end{tikzpicture}
\end{subfigure} 
\par\bigskip
\par\bigskip
\begin{subfigure}{.20\linewidth}
\begin{tikzpicture}[scale=.4,rotate=0]
			\begin{scope}[xshift = 1cm]
					\foreach \x/\y in 
					{0/1,1/0,1/1,2/1,3/0,3/1,4/0}
					{
						\cell{\x}{\y}
					}
				\end{scope}	
				
			\end{tikzpicture}
\end{subfigure}
\hspace{0.000005cm}
\begin{subfigure}{.20\linewidth}
\begin{tikzpicture}[scale=.4,rotate=0]
			\begin{scope}[xshift = 1cm]
					\foreach \x/\y in 
					{0/1,1/0,1/1,2/1,3/0,3/1,4/1}
					{
						\cell{\x}{\y}
					}
				\end{scope}	
				
			\end{tikzpicture}
\end{subfigure}
\hspace{0.000005cm}
\begin{subfigure}{.20\linewidth}
\begin{tikzpicture}[scale=.4,rotate=0]
			\begin{scope}[xshift = 1cm]
					\foreach \x/\y in 
					{0/0,1/0,1/1,2/1,3/1,3/2,4/1}
					{
						\cell{\x}{\y}
					}
				\end{scope}	
				
			\end{tikzpicture}
\end{subfigure}
\par\bigskip
\par\bigskip
\begin{subfigure}{.20\linewidth}
\begin{tikzpicture}[scale=.4,rotate=0]
			\begin{scope}[xshift = 1cm]
					\foreach \x/\y in 
				{0/0,1/0,1/1,2/1,3/1,3/2,4/2}
					{
						\cell{\x}{\y}
					}
				\end{scope}	
				
			\end{tikzpicture}
\end{subfigure} \hspace{0.000005cm}
\begin{subfigure}{.20\linewidth}
\begin{tikzpicture}[scale=.4,rotate=0]
			\begin{scope}[xshift = 1cm]
					\foreach \x/\y in 
					{0/1,1/0,1/1,2/1,3/1,3/2,4/1}
					{
						\cell{\x}{\y}
					}
				\end{scope}	
				
			\end{tikzpicture}
\end{subfigure}
\hspace{0.000005cm}
\begin{subfigure}{.20\linewidth}
\begin{tikzpicture}[scale=.4,rotate=0]
			\begin{scope}[xshift = 1cm]
					\foreach \x/\y in 
					{0/1,1/0,1/1,2/1,3/1,3/2,4/2}
					{
						\cell{\x}{\y}
					}
				\end{scope}	
				
			\end{tikzpicture}
\end{subfigure}
\par\bigskip
\par\bigskip
\begin{subfigure}{.20\linewidth}
\begin{tikzpicture}[scale=.4,rotate=0]
			\begin{scope}[xshift = 1cm]
					\foreach \x/\y in 
					{0/2,1/2,1/3,2/2,3/1,3/2,4/1}
					{
						\cell{\x}{\y}
					}
				\end{scope}	
				
			\end{tikzpicture}
\end{subfigure}\hspace{0.000005cm}
\begin{subfigure}{.20\linewidth}
\begin{tikzpicture}[scale=.4,rotate=0]
			\begin{scope}[xshift = 1cm]
					\foreach \x/\y in 
				{0/2,1/2,1/3,2/2,3/1,3/2,4/2}
					{
						\cell{\x}{\y}
					}
				\end{scope}	
				
			\end{tikzpicture}
\end{subfigure}\hspace{0.000005cm}
\begin{subfigure}{.20\linewidth}
\begin{tikzpicture}[scale=.4,rotate=0]
			\begin{scope}[xshift = 1cm]
					\foreach \x/\y in 
					{0/3,1/2,1/3,2/2,3/1,3/2,4/1}
					{
						\cell{\x}{\y}
					}
				\end{scope}	
				
			\end{tikzpicture}
\end{subfigure}
\par\bigskip
\par\bigskip
\begin{subfigure}{.20\linewidth}
\begin{tikzpicture}[scale=.4,rotate=0]
			\begin{scope}[xshift = 1cm]
					\foreach \x/\y in 
					{0/3,1/2,1/3,2/2,3/1,3/2,4/2}
					{
						\cell{\x}{\y}
					}
				\end{scope}	
				
			\end{tikzpicture}
\end{subfigure}

  \caption{Insertion of two cells in non-successive columns.}
  \label{a4}
\end{figure}
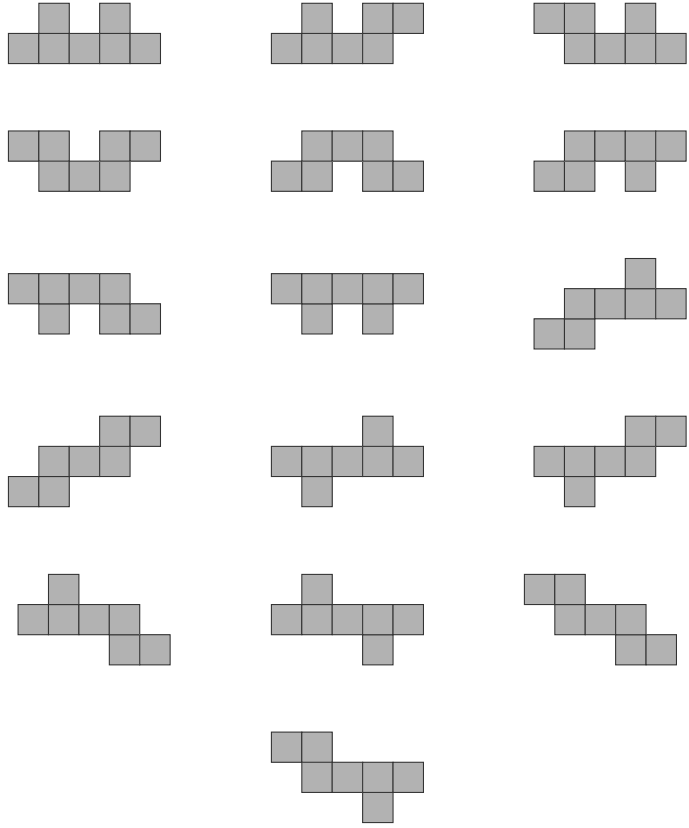			

			\end{itemize}

		\end{itemize}
\end{itemize}
By summing the number of possibilities, we obtain $8k^2-19k+16$.

\end{proof}

In general case, we have the following result,
\begin{theorem}
\label{t5_2}
$h_{k,k+i}$ is a polynomial of degree $i$ whose highest-degree-term is equal to $\cfrac{4^i}{i !}$ for $k\geq i+1$.
\end{theorem}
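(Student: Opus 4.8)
The plan is to reduce the problem to a transparent product formula for $h_{k,k+i}$ and then read off both its polynomiality and its leading term. A column-convex polyomino with $k$ columns is, up to translation, canonically described by its columns: column $j$ is a vertical segment of height $e_j+1$ with $e_j\ge 0$, so the area equals $k+\sum_j e_j$ and hence $e_1+\cdots+e_k=i$; fixing the first column once and for all, column-convexity forces the bottom of column $j+1$ to lie in a window of exactly $e_j+e_{j+1}+1$ consecutive positions relative to the bottom of column $j$, and these $k-1$ choices are independent. This yields
$$h_{k,k+i}=\sum_{\substack{e_1,\dots,e_k\ge 0\\ e_1+\cdots+e_k=i}}\ \prod_{j=1}^{k-1}\bigl(e_j+e_{j+1}+1\bigr),$$
which is exactly the general form of the ``inject $i$ extra cells into the $1\times k$ bar, then cut and glue'' procedure used for $i\le 2$ in the proof of Theorem \ref{t5_1}. (Alternatively one may argue directly from the closed form of Lemma \ref{l4_1} with $n=k+i$.)

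First I would prove that, for $k\ge i+1$, the right-hand side is a polynomial in $k$ of degree at most $i$. Sort the compositions $(e_1,\dots,e_k)$ by their support pattern, i.e. by the multiset of nonzero values, the adjacency structure of the columns carrying them, and whether those columns touch an end column. Since $\sum_j e_j=i$, every pattern involves at most $i$ columns grouped into at most $i$ maximal runs, and there are only finitely many patterns, independently of $k$. For a fixed pattern the product $\prod_j(e_j+e_{j+1}+1)$ is a constant, while the number of admissible placements of the runs among the $k$ columns is a polynomial in $k$ of degree at most the number of runs (hence $\le i$), with degree exactly $i$ only when there are $i$ runs, each a single excess cell, all interior. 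Summing the finitely many contributions shows $h_{k,k+i}$ is a polynomial in $k$ of degree $\le i$. The same follows from Lemma \ref{l4_1}: writing $n=k+i$, the summand $\binom{k-a-1}{a}\binom{2k-b-2}{b}\binom{k-2a-1}{i-a-b}$ vanishes unless $a+b\le i$, so for $k\ge i+1$ the sum runs over a fixed finite index set, and each summand is then a product of $a$, $b$ and $i-a-b$ linear factors in $k$, of total degree $i$.

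Next I would identify the degree-$i$ coefficient. Combinatorially, a pattern with $i$ runs forces every nonzero $e_j$ to equal $1$, the occupied columns to be pairwise non-adjacent, and, to reach degree $i$, none of them to touch an end column; such a column then contributes the two factors $(0+1+1)$ and $(1+0+1)$, so the product is $4^i$, while the number of placements of $i$ pairwise non-adjacent interior columns is $\binom{k-i-1}{i}=\tfrac{k^i}{i!}+O(k^{i-1})$, and every other pattern contributes $O(k^{i-1})$. Hence the leading term of $h_{k,k+i}$ is $\tfrac{4^i}{i!}k^i$, which is nonzero, so the degree is exactly $i$. The same constant falls out algebraically: the leading coefficients of $\binom{k-a-1}{a}$, $\binom{2k-b-2}{b}$, $\binom{k-2a-1}{i-a-b}$ are $\tfrac1{a!}$, $\tfrac{2^b}{b!}$, $\tfrac1{(i-a-b)!}$, so the degree-$i$ coefficient of $h_{k,k+i}$ equals $\sum_{a+b\le i}\tfrac{2^b}{a!\,b!\,(i-a-b)!}=\tfrac1{i!}\sum_{a+b\le i}\tfrac{i!\,2^b}{a!\,b!\,(i-a-b)!}=\tfrac1{i!}(1+2+1)^i=\tfrac{4^i}{i!}$ by the multinomial theorem.

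The step I expect to be the main obstacle is making the polynomiality fully rigorous, together with ruling out any non-generic configuration that might secretly reach degree $i$. In the combinatorial route this means a careful enumeration of support patterns and a proof that each placement count is genuinely polynomial in $k$ of the stated degree, with attention to the boundary columns (a run abutting an end column carries one factor fewer, which both lowers the degree and changes the constant). In the algebraic route the delicate point is to check, for every $k\ge i+1$ and not merely for $k$ large, that each integer binomial coefficient in Lemma \ref{l4_1} agrees with the polynomial in $k$ obtained by expanding it as a product of linear factors: for $k\ge i+1$ one has $k-a-1\ge 0$ and $2k-b-2\ge 0$ whenever $a+b\le i$, so the first two factors agree, and in the residual range $a+1\le k\le 2a$ where $\binom{k-2a-1}{i-a-b}$ could disagree the polynomial $\binom{k-a-1}{a}$ already vanishes, so the whole summand is $0$ on both sides. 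Once this is in place, the two displayed computations complete the proof.
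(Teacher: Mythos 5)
Your proof is correct, and it shares the paper's combinatorial core --- the dominant contribution $4^i\binom{k-i-1}{i}$ coming from $i$ pairwise non-adjacent interior columns each receiving one extra cell, with every other configuration relegated to degree at most $i-1$ --- but your execution is genuinely different and substantially more rigorous. The paper works directly on the $1\times k$ bar and simply asserts both the $4^i$ count per placement and the degree drop for the remaining cases; you instead derive the exact closed form $h_{k,k+i}=\sum_{e_1+\cdots+e_k=i}\prod_{j=1}^{k-1}(e_j+e_{j+1}+1)$ (which checks out against Table \ref{t2}, e.g.\ it gives $31$ for $k=3$, $n=5$) and then stratify by support pattern, turning the paper's appeal to ``all other cases'' into a finite sum of placement polynomials of controlled degree; your attention to the boundary (a run pinned to an end column loses one degree of freedom, so only the all-interior, all-isolated pattern reaches degree $i$) is precisely the point the paper glosses over. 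The multinomial computation $\sum_{a+b\le i}2^b/\bigl(a!\,b!\,(i-a-b)!\bigr)=(1+2+1)^i/i!=4^i/i!$ is a pleasant independent confirmation of the leading coefficient that the paper does not have. One caveat concerning only your alternative algebraic route: Lemma \ref{l4_1} as printed is inconsistent with Table \ref{t2} (it yields $2$ rather than $4$ for $k=2$, $n=3$, and $4k-6$ rather than $4k-4$ for $n=k+1$), so while your leading-term extraction from it happens to land on the correct constant, you should not rely on that lemma for the full polynomial; fortunately your primary combinatorial argument is self-contained and does not need it.
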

\begin{proof}
The proof is based on the same principle as in Theorem \ref{t5_1}. Let us consider the polyomino having an area $k$ and $k$ columns and let us add to it $i$ cells. If we add $i$ cells on $i$ non-successive columns different from the first and the last columns, it is equivalent to find the number of tilings of a board of length $1\times (k-1)$ with $i$ dominoes and $k-2i-1$ squares. Thus, we have $\binom{k-i-1}{i}$ different choosing. As, there are $4^i$ possibilities for each one, we obtain $4^i\binom{k-i-1}{i}$ possible constructions. The expression $4^i\binom{k-i-1}{i}$ is a polynomial of degree $i$ in $k$ with the coefficient of $k^i$ equals to $\cfrac{4^i}{i!}$. In other cases of choosing $i$ columns with at least two successive columns, we have $\binom{k-2}{i}-\binom{k-i-1}{i}$ possibilities, with $\binom{k-2}{i}$ the number of choosing $i$ columns different from the first and the last ones. The expression $\binom{k-2}{i}-\binom{k-i-1}{i}$ is a polynomial of degree $i-1$. In all other cases, we choose at most $i-1$ columns, leading to a term of degree lower then $i$. Therefore we obtain the result.

\end{proof}

From Theorem \ref{t5_2} and the first values of $h_{k,n}$, we get the following corollary,
\begin{cor}
\begin{align*}
h_{k,k+3}&=\cfrac{32}{3}k^3-44k^2+\cfrac{268}{3}k-76 \hspace{6pt}\text{with}\hspace{6pt} k\geq 4.\\
h_{k,k+4}&=\cfrac{32}{3}k^4-\cfrac{200}{3}k^3+\cfrac{1403}{6}k^2-\cfrac{2717}{6}k+384 \hspace{6pt}\text{with} \hspace{6pt}k\geq 5.\\
h_{k,k+5}&=\cfrac{128}{15}k^5-\cfrac{224}{3}k^4+\cfrac{1174}{3}k^3-\cfrac{3784}{3}k^2+\cfrac{35522}{15}k-2004 \hspace{6pt}\text{with} \hspace{6pt}k\geq 6.\\
h_{k,k+6}&=\cfrac{256}{45}k^6-\cfrac{992}{15}k^5+\cfrac{4292}{9}k^4-\cfrac{13427}{6}k^3+\cfrac{617753}{90}k^2-\cfrac{189503}{15}k+10672\\
& \hspace{6pt}\text{with} \hspace{6pt}k\geq 7.\\
\end{align*}
\end{cor}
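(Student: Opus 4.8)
By Theorem~\ref{t5_2}, for each fixed $i$ the function $k\mapsto h_{k,k+i}$ agrees, on the range $k\ge i+1$, with a polynomial in $k$ of degree exactly $i$ and leading coefficient $4^{i}/i!$. Hence for each of the four cases $i\in\{3,4,5,6\}$ the polynomial is pinned down completely once its value is known at $i+1$ distinct admissible points, and the plan is simply to produce that data and interpolate.

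To get the needed values without depending only on Table~\ref{t2} (which stops at $n=10$), I would read $h_{k,k+i}$ off the generating function recalled in the proof of Theorem~\ref{t4_2}: the generating function of column-convex polyominoes with exactly $k$ columns, graded by area, is $\dfrac{p^{k}}{(1-p)^{2k-1}}\sum_{j=0}^{k-1}D_{k-j-1,j}\,p^{j}$. Extracting the coefficient of $p^{k+i}$ and using $[p^{m}](1-p)^{-(2k-1)}=\binom{2k+m-2}{m}$ for $m\ge 0$ gives, for $k\ge i+1$,
\[
h_{k,k+i}=\sum_{j=0}^{i}D_{k-j-1,j}\binom{2k+i-j-2}{i-j}.
\]
Here the hypothesis $k\ge i+1$ is precisely what lets the upper summation limit be $i$ rather than $k-1$; for smaller $k$ the top terms fall away, which is why the closed formula breaks down below that threshold, consistently with Theorem~\ref{t5_2}. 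Since, for $j$ fixed, $D_{a,j}=\sum_{\ell}\binom{j}{\ell}\binom{a+j-\ell}{j}$ is a polynomial in $a$ of degree $j$ — explicitly $D_{a,0}=1$, $D_{a,1}=2a+1$, $D_{a,2}=2a^{2}+2a+1$, $D_{a,3}=\tfrac{4}{3}a^{3}+2a^{2}+\tfrac{8}{3}a+1$, and so on — substituting $a=k-j-1$ turns the displayed sum into an explicit polynomial in $k$.

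Then I would expand this for $i=3,4,5,6$ in turn, collect powers of $k$, and read off the four stated formulas; equivalently, one can feed enough numerical values of $h_{k,k+i}$ (from Table~\ref{t2}, supplemented for $i\ge 4$ by the closed form of Lemma~\ref{l4_1}) into Lagrange interpolation. Two checks come for free: the leading coefficient must equal $4^{i}/i!$ by Theorem~\ref{t5_2}, and the polynomials obtained must reproduce every entry of Table~\ref{t2} with $k\ge i+1$; running the same computation with $i\le 2$ moreover recovers the formulas of Theorem~\ref{t5_1}.

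The only real difficulty is bookkeeping. Each formula is a sum of $i+1$ products of two polynomials in $k$ — one of them a shifted Delannoy polynomial — and combining them, especially for $i=5,6$ where sixth-degree polynomials with denominators up to $45$ appear, is error-prone though entirely mechanical. There is no conceptual obstacle beyond Theorem~\ref{t5_2}.
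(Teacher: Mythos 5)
Your proposal is correct and matches the paper's own (one-line) argument: the paper also obtains these formulas by combining Theorem \ref{t5_2} — which fixes the degree and the leading coefficient $4^i/i!$ of $h_{k,k+i}$ as a polynomial in $k$ for $k\ge i+1$ — with computed values of $h_{k,n}$ and interpolating. Your explicit extraction $h_{k,k+i}=\sum_{j=0}^{i}D_{k-j-1,j}\binom{2k+i-j-2}{i-j}$ from the width-$k$ generating function is a worthwhile (and in fact necessary) supplement, since Table \ref{t2} stops at $n=10$ and so cannot by itself supply the interpolation data for $i\ge 4$; I verified it at $k=4$, $i=3$, where it gives $84+140+35+1=260=h_{4,7}$, in agreement with the table and with the stated cubic.
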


\subsection{Plateau polycubes}

In the case of polycubes, the reasoning is similar to the one used for polyominoes.\\
Let $r_{k,m}$ be the number of plateau polycubes of width $k$ and lateral area $m$.
\begin{theorem}\mbox{}
\begin{enumerate}
\item $r_{k,2k}=1$, with $k\geq 1$.
\item $r_{k,2k+1}=8k-8$, with $k\geq 2$.
\item $r_{k,2k+2}=32k^2-70k+48$, with $k\geq 3$.
\end{enumerate}
\end{theorem}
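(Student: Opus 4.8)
The plan is to invoke directly the convolution formula of Theorem~\ref{t4_1} together with the small explicit values of $h_{k,n}$ recorded in Theorem~\ref{t5_1}. The first observation is that the coefficient function $\alpha_k(u)$ of Theorem~\ref{t4_1} is, by Lemma~\ref{l4_1}, exactly $h_{k,u}$, the number of column-convex polyominoes with $k$ columns and area $u$; hence $r_{k,m}=\sum_{i=k}^{m-k}h_{k,i}\,h_{k,m-i}$. When $m$ is close to $2k$ this sum has only a handful of nonzero terms, because $h_{k,i}=0$ for $i<k$, so the index $i$ is confined to $\{k,k+1,\dots,m-k\}$, which for $m\in\{2k,2k+1,2k+2\}$ has at most three elements.

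Carrying this out: for $m=2k$ the only admissible index is $i=k$, so $r_{k,2k}=h_{k,k}^{2}=1$ by item~(1) of Theorem~\ref{t5_1}, valid for $k\ge 1$. For $m=2k+1$ the admissible indices are $i=k$ and $i=k+1$, so $r_{k,2k+1}=2\,h_{k,k}\,h_{k,k+1}=2(4k-4)=8k-8$, using items~(1) and~(2) of Theorem~\ref{t5_1}, valid for $k\ge 2$ (the range where $h_{k,k+1}=4k-4$ holds). For $m=2k+2$ the admissible indices are $i=k,k+1,k+2$, giving
\[
r_{k,2k+2}=2\,h_{k,k}\,h_{k,k+2}+h_{k,k+1}^{2}=2\bigl(8k^{2}-19k+16\bigr)+(4k-4)^{2},
\]
which expands to $32k^{2}-70k+48$; this uses items~(1)--(3) of Theorem~\ref{t5_1} and is valid for $k\ge 3$, precisely the range in which the quoted formula for $h_{k,k+2}$ applies.

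There is no real obstacle here beyond careful bookkeeping: all the combinatorial work is already contained in Theorem~\ref{t5_1}, and the three hypotheses $k\ge 1$, $k\ge 2$, $k\ge 3$ are exactly the lower bounds needed for the relevant values of $h_{k,k}$, $h_{k,k+1}$, $h_{k,k+2}$ to be the stated polynomials. The one point deserving explicit verification is that the convolution range is fully enumerated — that for $m=2k+2$ the pairs $(i,m-i)$ are $(k,k+2)$, $(k+1,k+1)$, $(k+2,k)$ and nothing else — so that the symmetric terms combine as $2h_{k,k}h_{k,k+2}+h_{k,k+1}^{2}$. (Alternatively, one could reprove these values by the direct ``add cells, cut and glue'' construction used for Theorem~\ref{t5_1}, working on the minimal plateau polycube of width $k$ and lateral area $2k$, but the convolution argument above is shorter.)
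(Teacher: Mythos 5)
Your proposal is correct, and the arithmetic it produces is exactly the paper's: $r_{k,2k+2}=(4k-4)^2+2(8k^2-19k+16)=32k^2-70k+48$. The route is, however, the one the paper explicitly sets aside: at the start of Section~\ref{AR} the authors note the results ``can also be obtained in algebraic way from Lemma~\ref{l4_1} and Theorem~\ref{t4_1},'' and that is precisely your argument --- identify $\alpha_k(u)$ with $h_{k,u}$, truncate the convolution $r_{k,m}=\sum_{i=k}^{m-k}h_{k,i}h_{k,m-i}$ to its two or three surviving terms, and substitute the values from Theorem~\ref{t5_1}. The paper's own proof instead re-runs the combinatorial construction: starting from the pair of minimal $k$-column polyominoes, it splits according to whether the two extra cells land in the same projection ($2(8k^2-19k+16)$ ways) or one in each ($(4k-4)^2$ ways), invoking the bijection between pairs of column-convex polyominoes and plateau polycubes. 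The two case splits are term-for-term the same decomposition, so nothing is gained or lost mathematically; your version is cleaner bookkeeping and makes the dependence on Theorem~\ref{t4_1} and Theorem~\ref{t5_1} explicit, while the paper's version keeps the argument self-contained in the combinatorial language of the section. Your attention to the ranges $k\ge 1,2,3$ matching the validity of the $h_{k,k+i}$ formulas is a point the paper leaves implicit.
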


\begin{proof}
The proof of these formulas is based on a unique principle. To build a plateau polycube of width $k$ and lateral area $2k+i$, we start from two polyominoes having area $k$ and $k$ columns. Next, we enumerate all the ways of adding $i$ cells on the two polyominoes.  For each couple of column-convex polyominoes we associate a unique plateau polycube. We only detail the proof of the value $r_{k,2k+2}$. \\
Let us consider two polyominoes of $k$ columns and area $k$ and let us enumerate all the ways to inject two cells in the polyominoes to obtain two column-convex polyominoes.   
We have the following cases:
\begin{itemize}
\item One cell is added in the first polyomino and the other in the second, the same way as in Theorem \ref{t5_1}. We have the $4k-4$ possibilities of construction for each polyomino. So, we have $(4k-4)^2$ possibilities.
\item The two cells are added in one rectangle. This case is enumerated in Theorem \ref{t5_1}. Thus, we have $2\times(8k^2-19k+16)$ possibilities.
\end{itemize}
By summing all the cases we have the result.
\end{proof}
\begin{theorem}
\label{t5_4}
$r_{k,2k+i}$ is a polynomial of degree $i$ whose highest-degree-term is equal to $\cfrac{8^i}{i !}$ for $k\geq i$.
\end{theorem}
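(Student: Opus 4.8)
The plan is to derive Theorem~\ref{t5_4} from the convolution identity of Theorem~\ref{t4_1} together with the asymptotic polynomiality of column-convex counts established in Theorem~\ref{t5_2}. First I would note that $\alpha_k(u)$ appearing in Theorem~\ref{t4_1} is precisely $h_{k,u}$, the number of column-convex polyominoes with $k$ columns and area $u$ (Lemma~\ref{l4_1}), and that $h_{k,u}=0$ whenever $u<k$. Hence, taking $m=2k+i$ in Theorem~\ref{t4_1} and substituting $u=k+a$, the only surviving terms are those with $0\le a\le i$, so that
$$r_{k,2k+i}=\sum_{a=0}^{i}h_{k,k+a}\,h_{k,k+i-a}.$$

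Next I would invoke Theorem~\ref{t5_2}: for $k$ large enough that the polynomial description applies to each of the factors $h_{k,k+a}$ with $0\le a\le i$, one has $h_{k,k+a}=\frac{4^a}{a!}k^a+(\text{terms of lower degree in }k)$, all the coefficients being nonnegative since $h_{k,k+a}$ is a counting function. Consequently the product $h_{k,k+a}\,h_{k,k+i-a}$ is a polynomial in $k$ of degree exactly $a+(i-a)=i$, with leading coefficient
$$\frac{4^a}{a!}\cdot\frac{4^{i-a}}{(i-a)!}=\frac{4^i}{i!}\binom{i}{a}.$$

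Finally I would sum over $a$. A finite sum of degree-$i$ polynomials has degree at most $i$, and here all the leading coefficients are strictly positive, so no cancellation of the top term is possible: $r_{k,2k+i}$ is a polynomial in $k$ of degree exactly $i$, and its leading coefficient is
$$\sum_{a=0}^{i}\frac{4^i}{i!}\binom{i}{a}=\frac{4^i}{i!}\,2^i=\frac{8^i}{i!},$$
which is the asserted highest-degree term. This also matches the underlying construction: the $i$ extra cells of a lateral-area-$(2k+i)$ plateau polycube are distributed between the two column-convex polyominoes, $a$ on one side and $i-a$ on the other, and the binomial sum above is exactly the Vandermonde-type convolution of the two dominant contributions counted in Theorem~\ref{t5_2}.

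There is essentially no hard step here; the argument is a direct convolution of the $h_{k,k+i}$ asymptotics. The only points requiring a little care are the non-cancellation of leading terms — immediate from positivity of the counts — and pinning down the precise range of $k$: one needs $k$ large enough that Theorem~\ref{t5_2} is simultaneously valid for every factor $h_{k,k+a}$, $0\le a\le i$, which is the source of the lower bound on $k$ in the statement.
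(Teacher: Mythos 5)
Your proposal is correct and follows essentially the same route as the paper: both arguments split the $i$ extra cells between the two column-convex projections, apply Theorem~\ref{t5_2} to each factor to get leading coefficients $\frac{4^a}{a!}$ and $\frac{4^{i-a}}{(i-a)!}$, and conclude via $\sum_{a=0}^{i}\frac{4^i}{a!(i-a)!}=\frac{8^i}{i!}$. You merely make the convolution explicit through Theorem~\ref{t4_1} (identifying $\alpha_k(u)$ with $h_{k,u}$) and add the worthwhile observation that positivity of the counts rules out cancellation of the top-degree terms, a point the paper leaves implicit.
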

\begin{proof}
To build a plateau polycube of width $k$ and lateral area $2k+i$, we start with two polyominoes having an area $k$ and $k$ columns, than we enumerate all the ways of adding $i$ cells in one or the two rectangles to form two column-convex polyominoes. For each couple of column-convex polyominoes we associate a unique plateau polycube.\\
If we add $j$ cells on the first polyomino, then the number of possibilities is a polynomial of degree $j$ with the coefficient of $k^j$ equal to $\cfrac{4^j}{j!}$. Thus,  we add $i-j$ cells on the second polyomino, the number of possibilities is a polynomial of degree $i-j$ with the coefficient of $k^{i-j}$ equal to $\cfrac{4^{i-j}}{(i-j)!}$. So, the number of possibilities of adding $i$ cells in the two polyominoes is a polynomial of degree $i$ with the coefficient of the $k^i$ is $\cfrac{4^i}{j!(i-j)!}$.\\
By summing the coefficient of $k^i$ for all possible $j's$ we obtain,
$$\sum\limits_{j=0}^{i}\frac{4^i}{j!(i-j)!}=4^i\sum\limits_{j=0}^{i}\frac{1}{j!(i-j)!}=\frac{4^i}{i!}\sum\limits_{j=0}^{i}\frac{i!}{j!(i-j)!}=\frac{4^i 2^i}{i!}= \frac{8^i}{i !}.$$
\end{proof}

As in previous subsection, using the result of Theorem \ref{t5_4} and the first values of $r_{k,m}$, we get the following corollary,
\begin{cor}
\begin{align*}
r_{k,k+3}&=\cfrac{256}{3}k^3-304k^2+\cfrac{1376}{3}k-280 \hspace{6pt}\text{with}\hspace{6pt} k\geq 4.\\
r_{k,k+4}&=\cfrac{512}{3}k^4-\cfrac{2624}{3}k^3+\cfrac{6454}{3}k^2-\cfrac{8509}{3}k+1632 \hspace{6pt}\text{with}\hspace{6pt} k\geq 5.\\
r_{k,k+5}&=\cfrac{4096}{15}k^5-\cfrac{5632}{3}k^4+\cfrac{19888}{3}k^3-\cfrac{42104}{3}k^2+\cfrac{85888}{5}k-9512 \hspace{6pt}\text{with}\hspace{6pt} k\geq 6.\\
r_{k,k+6}&=\cfrac{16384}{45}k^6-\cfrac{48128}{15}k^5+\cfrac{136256}{9}k^4-45444k^3+\cfrac{3971986}{45}k^2-\cfrac{1543582}{15}k+\\
&55440 \hspace{6pt}\text{with}\hspace{6pt} k\geq 7.\\
\end{align*}
\end{cor}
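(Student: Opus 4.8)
The plan is to derive each of the four displayed identities (which are the values $r_{k,2k+i}$ for $i\in\{3,4,5,6\}$, since Theorem~\ref{t5_4} gives leading coefficient $8^i/i!$ and for $i=3$ this is $512/6=256/3$, matching the first line) from the convolution formula of Theorem~\ref{t4_1}, specialised to a plateau polycube of width $k$ and lateral area $2k+i$, combined with the explicit expressions for $h_{k,k+j}$ already available. Recall that the weight $\alpha_k(u)$ appearing in Theorem~\ref{t4_1} is precisely $h_{k,u}$, the number of column-convex polyominoes with $k$ columns and area $u$ (Lemma~\ref{l4_1}). Writing $m=2k+i$ and substituting $u=k+j$ in the convolution, the index $j$ runs from $0$ to $i$ and the formula collapses to
\[
r_{k,2k+i}=\sum_{j=0}^{i} h_{k,k+j}\,h_{k,k+i-j}.
\]
Combinatorially this is exactly the principle used in the proof of Theorem~\ref{t5_4}: one distributes the $i$ extra cells as $j$ cells inserted into the first projection and $i-j$ into the second, where $h_{k,k+j}$ counts the insertions yielding a column-convex polyomino of the correct area.

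With this reduction in hand, the second step is purely a substitution. For each target $i$ I would plug in the closed forms $h_{k,k}=1$, $h_{k,k+1}=4k-4$, $h_{k,k+2}=8k^2-19k+16$ from Theorem~\ref{t5_1}, together with the polynomials for $h_{k,k+3},\dots,h_{k,k+6}$ from the preceding corollary, and carry out the indicated products and sum. For instance, for $i=3$ the symmetry of the convolution gives
\[
r_{k,2k+3}=2\,h_{k,k}\,h_{k,k+3}+2\,h_{k,k+1}\,h_{k,k+2},
\]
and expanding $2h_{k,k+3}+2(4k-4)(8k^2-19k+16)$ yields $\frac{256}{3}k^3-304k^2+\frac{1376}{3}k-280$. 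The cases $i=4,5,6$ are handled identically, using the pairings $2h_{k,k}h_{k,k+4}+2h_{k,k+1}h_{k,k+3}+h_{k,k+2}^{2}$ for $i=4$, and the analogous symmetric sums for $i=5,6$; each is a finite product-and-sum of the known polynomials and so is itself an explicit polynomial in $k$.

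The only genuine conceptual issue is the range of validity. The component formula $h_{k,k+j}$ holds only for $k\ge j+1$, and the extreme term $h_{k,k}\,h_{k,k+i}$ of the convolution uses $h_{k,k+i}$, which is valid for $k\ge i+1$; hence each identity is asserted precisely for $k\ge i+1$, matching the displayed ranges. The remaining work is bookkeeping: collecting the coefficients of each power of $k$ over common denominators. This arithmetic is the main (though routine) obstacle, since the coefficients for $i=5,6$ involve denominators $15$ and $45$ and many cross terms; I would guard against slips by cross-checking the resulting polynomials against the tabulated values of $r_{k,m}$ in Table~\ref{t1}.
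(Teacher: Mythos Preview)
Your argument is correct, and your identification of the intended quantity as $r_{k,2k+i}$ (the corollary's subscripts $k+i$ are a typographical slip) is right: the leading coefficients $8^i/i!$ and the ranges $k\ge i+1$ confirm this. The convolution reduction $r_{k,2k+i}=\sum_{j=0}^{i}h_{k,k+j}\,h_{k,k+i-j}$ is valid, and your sample computation for $i=3$ checks out.

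Your route differs from the paper's. The paper does not expand the convolution; instead it invokes Theorem~\ref{t5_4} to assert that $r_{k,2k+i}$ is a polynomial in $k$ of degree $i$, and then determines the remaining coefficients by interpolation from the numerical values in Table~\ref{t1}. Your approach substitutes the closed-form polynomials for $h_{k,k+j}$ directly into the convolution and expands. The paper's method is lighter on algebra once the table is available, but leans on numerical data; your method is self-contained (it needs only the polyomino corollary, not the polycube table) and makes the validity range $k\ge i+1$ transparent, since it is forced by the most restrictive factor $h_{k,k+i}$. Either approach yields the same polynomials, and your suggested cross-check against Table~\ref{t1} is in effect what the paper's interpolation does in the first place.
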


\end{document}